\newcommand{\Href}[2]{\texorpdfstring{\hyperref[#2]{#1~\ref{#2}}}{#1~\ref{#2}}}
\newtheorem{thm}{Theorem}
\newtheorem{prp}{Proposition}[section]
\newtheorem{lem}{Lemma}[section]
\newtheorem{cor}{Corollary}[section]
\newtheorem{conj}{Conjecture}[section]
\theoremstyle{definition}
\newtheorem{dfn}{Definition}[section]
\newtheorem{rem}[dfn]{Remark}
\newcommand{\st}{:\;}
\def\R{{\mathbb R}}%
\renewcommand{\Re}{\R}
\newcommand{\Red}{\Re^d}
\newcommand{\Redp}{\Re^{d+1}}
\renewcommand{\phi}{\varphi}
\newcommand{\ball}[1]{\mathbf{B}^{#1}}
\newcommand{\Se}{\mathbb{S}}
\newcommand{\FF}{\mathcal{F}}
\newcommand{\Sed}{\Se^{d-1}}
\providecommand{\parenth}[1]{\left(#1\right)}%
\providecommand{\braces}[1]{\left\{#1\right\}}%
\newcommand{\iprod}[2]{\left\langle#1,#2\right\rangle}%
\def\polar{\circ}
\newcommand{\polarset}[1]{{#1}^{\polar}}%
\newcommand{\id}{\mathrm{Id}}
\newcommand{\minf}[2]{\mathop\bigwedge\limits_{#1}{#2}}%
\newcommand{\conv}{\mathrm{conv}}%
\newcommand{\width}[2][K]{\mathrm{width}_{#1}\ \!\!\!\left(#2\right)}%
\newcommand{\diam}[1]{\mathrm{diam}\ \!\!\!\left(#1\right)}%
\newcommand{\norm}[1]{\left\|#1\right\|}%
\newcommand{\enorm}[1]{\left|#1\right|}
\providecommand{\parenth}[1]{\left(#1\right)}%
\providecommand{\braces}[1]{\left\{#1\right\}}%
\newcommand{\vol}[1]{\operatorname{vol}\nolimits_{#1}}%
\newcommand{\funpos}[1]{\mathcal{E}\! \left[ #1\right]}
\title{Helly numbers for Quantitative Helly-type results}
\subjclass[2020]{52A27 (primary), 52A35}
\keywords{Colorful Helly-type theorem, log-concave function, John ellipsoid}
\author{Grigory Ivanov\address{Grigory Ivanov:
Pontifícia Universidade Cat\'olica do Rio de Janeiro \\
Departamento de Matematica,
Rua Marquês de São Vicente, 225\\
Edif{\'i}cio Cardeal Leme, sala 862,
22451-900 G{\'a}vea, Rio de Janeiro, Brazil}
\email{grimivanov@gmail.com}
\and
M\'arton Nasz\'odi\address{M\'arton Nasz\'odi: HUN-REN Alfr\'ed R\'enyi Inst. of
Math.; Lor\'and E\"otv\"os University, Budapest}
\email{marton.naszodi@renyi.hu}
}
\newcommand{\widthbound}{(2d)^{-2d}}
\newcommand{\widthboundreciprocal}{(2d)^{2d}}
\begin{document}
\begin{abstract}
We obtain three Helly-type results. First, we establish a Quantitative Colorful Helly-type theorem with the optimal Helly number \(2d\) concerning the diameter of the intersection of a family of convex bodies. Second, we prove a Quantitative Helly-type theorem with the optimal Helly number \(2d+1\) for the pointwise minimum of logarithmically concave functions. Finally, we present a colorful version of the latter result with Helly number (number of color classes) \(3d+1\); however, we have no reason to believe that this bound is sharp.
\end{abstract}

\maketitle
\section{Introduction}

Helly's theorem \cite{helly1923mengen}, a fundamental achievement in convexity, posits that within a finite family of convex sets in $\mathbb{R}^d$, if the intersection of any subfamily of at most $d+1$ sets shares a common point, then all the sets in the family share at least one common point. Several extensions and generalizations have been found (see \cite{HolmsenHandbook17, barany2022helly} for recent surveys). The following one, due to Lov\'asz (cf.\ \cite{barany1982generalization}), is particularly noteworthy.

\begin{prp}[Colorful Helly Theorem]\label{prp:colorful_Helly}
    Let $\mathcal{F}_1, \dots, \mathcal{F}_{d+1}$ be finite families of convex sets in $\mathbb{R}^d$. Assume that for any choice $F_1 \in \mathcal{F}_1, \dots, F_{d+1} \in \mathcal{F}_{d+1}$, the intersection $\bigcap\limits_{i=1}^{d+1} F_i$ is non-empty. Then for some $i \in \{1, \dots, d+1\}$, the intersection of all sets in the family $\mathcal{F}_i$ is non-empty.
\end{prp}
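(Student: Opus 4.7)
The plan is to argue by contradiction via an extremal pivoting argument in the spirit of Bárány's proof of the Colorful Carathéodory theorem. Assume $\bigcap \mathcal{F}_i = \emptyset$ for every $i \in \{1, \ldots, d+1\}$. After intersecting every set with a sufficiently large closed ball, I may assume all sets are compact, so that the intersection of any colorful transversal is compact and, by hypothesis, nonempty.

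Fix a generic linear functional $u$ on $\mathbb{R}^d$. For each colorful transversal $T = (F_1, \ldots, F_{d+1})$ let $m(T)$ denote the unique $u$-minimizer on $\bigcap_i F_i$. Among the finitely many transversals, select $T^* = (F_1^*, \ldots, F_{d+1}^*)$ that maximizes $u(m(T^*))$, and set $x^* := m(T^*)$. Since each $\bigcap \mathcal{F}_i$ is empty, for every $i$ there is some $G_i \in \mathcal{F}_i$ with $x^* \notin G_i$; by separation, there exist a unit vector $\nu_i$ and a scalar $\alpha_i$ with $\langle \nu_i, x^* \rangle > \alpha_i$ and $\langle \nu_i, y \rangle \leq \alpha_i$ for every $y \in G_i$. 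Pivoting $F_i^*$ to $G_i$ produces a new colorful transversal $T^{(i)}$, and the aim is to exhibit some $i$ for which $u(m(T^{(i)})) > u(x^*)$, contradicting the maximality of $T^*$.

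The main obstacle is that the separating normals $\nu_i$ need not be aligned with $u$, so a naive single pivot could actually lower the $u$-minimum. The key observation should be that $x^*$ minimizes $u$ on the compact convex body $K = \bigcap_i F_i^*$, hence $-u$ lies in the outward normal cone of $K$ at $x^*$; since this cone is generated by the outward normals of the $F_i^*$'s active at $x^*$, and together with the $d+1$ pivot normals $\nu_i$ we have a collection of vectors in $\mathbb{R}^d$ that must be linearly dependent, a Carathéodory-style convex dependence among them singles out a distinguished color class $i$ whose pivot provably improves $u$. Making this linear-algebraic step precise --- essentially importing the core of Bárány's colorful Carathéodory argument into the Helly context --- is the crux of the proof.

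If the direct pivoting becomes too delicate, I would fall back on a reduction to the Colorful Carathéodory theorem: via LP/Farkas-type duality applied to hyperplane representations of the $F \in \mathcal{F}_i$, the hypothesis that every colorful transversal has a common point can be recast as a colorful convex-hull condition on the dual normal vectors in $\mathbb{R}^{d+1}$, after which Colorful Carathéodory supplies the required common witness, forcing $\bigcap \mathcal{F}_i \neq \emptyset$ for some $i$.
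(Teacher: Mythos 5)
The paper does not prove this proposition: it cites it (Lov\'asz, cf.\ \cite{barany1982generalization}), so there is no ``paper proof'' to compare against. Your approach --- extremal pivoting along a generic linear functional --- is indeed Lov\'asz's proof. The global structure (compactify, pick a generic $u$, take the colorful transversal $T^*$ whose $u$-minimum $x^*$ is highest, pivot to contradict maximality) is exactly right.

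However, the step you yourself flag as ``the crux'' is described incorrectly, not merely imprecisely. You propose to look at the active normals of the $F_i^*$'s at $x^*$ \emph{together with} the $d+1$ separating normals $\nu_i$ of the replacement sets $G_i$, and to extract a linear/conic dependence from that combined collection. That is the wrong collection of vectors, and no such dependence ``singles out'' the right color. The correct argument uses only the normal cone of $K=\bigcap_i F_i^*$ at $x^*$: since $x^*$ minimizes $\iprod{u}{\cdot}$ on $K$, the vector $-u$ lies in $N_K(x^*) = \sum_i N_{F_i^*}(x^*)$; conic Carath\'eodory in $\Red$ writes $-u$ as a nonnegative combination of at most $d$ generators, which come from at most $d$ of the $d+1$ colors, so some color $j$ contributes nothing. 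Hence $-u\in\sum_{i\neq j}N_{F_i^*}(x^*)$, i.e.\ $x^*$ is still a $u$-minimizer of $\bigcap_{i\neq j}F_i^*$, and pivoting $F_j^*\to G_j$ forces every point of the new (nonempty) colorful intersection to have strictly larger $u$-value than $x^*$ --- the contradiction. The $\nu_i$'s play no role in choosing $j$; only the fact $x^*\notin G_j$ is used afterward (and even there you do not need an explicit separating normal). You should also say what ``generic'' has to guarantee: uniqueness of the $u$-minimizer on the relevant finite collection of compact convex intersections, so that the strict inequality after pivoting actually holds (alternatively, break ties lexicographically). Finally, the proposed fallback via LP/Farkas duality to reduce to Colorful Carath\'eodory is a real but nontrivial route; as written it is just a gesture, not a proof. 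In short: right framework, but the selection step as stated would not go through and needs to be replaced by the normal-cone Carath\'eodory argument above.
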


Clearly, the Helly number (the number of families -- color classes) $d+1$ is optimal in the Colorful Helly theorem. 

Two other variants of Helly’s theorem were introduced by B\'ar\'any, Katchalski, and Pach \cite{barany1982quantitative}. Their Quantitative Diameter and Volume Theorems state the following:

\begin{prp}[Quantitative Volume Theorem]
 Assume that the intersection of any $2d$ or fewer members of a finite family of convex sets in $\mathbb{R}^d$ has volume at least 1. Then the volume of the intersection of all members of the family is at least $c_d$, a strictly positive constant depending only on $d$.
\end{prp}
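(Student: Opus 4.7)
The approach I would take goes through John's ellipsoid theorem: after an affine rescaling, one may assume the overall intersection $K^*:=\bigcap_i K_i$ has the unit Euclidean ball $B^d$ as its John ellipsoid, and then a circumscribed box in $2d$ coordinate directions simultaneously sandwiches $K^*$ and captures exactly $2d$ members of the family via supporting-hyperplane contact.

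First, I exploit affine invariance: applying an affine map $T$ with $V:=\abs{\det T}$ rescales all volumes by $V$, so the hypothesis translates to ``every $\le 2d$-intersection of the $TK_i$'s has volume $\ge V$''. Choose $T$ so that the John ellipsoid of $TK^*$ is $B^d$ centered at the origin (possible once we know $K^*$ has non-empty interior, handled at the end). John's theorem then gives $B^d \subseteq TK^* \subseteq d\,B^d$, and in particular $\vol{d}(TK^*) \ge \omega_d$, the volume of the unit ball.

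Next, for each coordinate direction $\pm e_j$ with $j = 1,\dots,d$, pick a supporting hyperplane $H_j^\pm$ of $TK^*$ with that outer normal. Since $TK^*=\bigcap_i TK_i$, the contact point of $H_j^\pm$ with $TK^*$ lies on the boundary of some $TK_{i(j,\pm)}$, making $H_j^\pm$ a supporting hyperplane of that set as well; the corresponding halfspace therefore contains $TK_{i(j,\pm)}$. The $2d$ halfspaces cut out a box $B_0 \subseteq [-d,d]^d$ of volume at most $(2d)^d$ containing the intersection $\bigcap_{j,\pm} TK_{i(j,\pm)}$. Applying the hypothesis to this $2d$-subfamily yields $\vol{d}\bigl(\bigcap_{j,\pm} TK_{i(j,\pm)}\bigr) \ge V$, and hence $V \le (2d)^d$. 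Combined with $\vol{d}(TK^*) \ge \omega_d$, this gives
\[
\vol{d}(K^*) \;=\; V^{-1}\vol{d}(TK^*) \;\ge\; \frac{\omega_d}{(2d)^d} \;=:\; c_d,
\]
the desired positive constant.

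The sole obstacle I anticipate is the technical point of ensuring that $K^*$ has non-empty interior, required to invoke John's theorem. I would handle this by $\epsilon$-thickening: replace each $K_i$ by $K_i+\epsilon B^d$. All intersections only grow, so the volume hypothesis persists; and since $K^*$ is non-empty by Helly's theorem (using the $d+1 \le 2d$ case of the hypothesis), the thickened intersection contains an $\epsilon$-ball around any common point, hence has non-empty interior. Running the main argument for each $\epsilon > 0$ and letting $\epsilon \downarrow 0$, using continuity of volume on compact convex bodies under Hausdorff convergence (after truncating to a large ball containing the relevant intersections), recovers the claim in general.
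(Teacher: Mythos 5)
The paper does not prove this proposition; it is quoted as a known result, citing B\'ar\'any--Katchalski--Pach for the original and Nasz\'odi/Brazitikos for the sharper constant, so your attempt must stand on its own. It does not: the step ``the contact point of $H_j^{\pm}$ with $TK^*$ lies on the boundary of some $TK_{i(j,\pm)}$, making $H_j^{\pm}$ a supporting hyperplane of that set as well'' is false. A boundary point of $K^*=\bigcap K_i$ does lie on $\partial K_i$ for some $i$, but a supporting hyperplane of the intersection at that point need not support $K_i$. Concretely, in the plane take $K_1=\{y\le x+1\}$ and $K_2=\{y\le -x+1\}$ (each intersected with a large box). Their intersection has a ``tent'' apex at $(0,1)$, and the horizontal line $y=1$ supports $K_1\cap K_2$ there; the apex lies on both $\partial K_1$ and $\partial K_2$, yet $y=1$ supports neither $K_1$ nor $K_2$. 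Normalizing so that $\ball{d}$ is the John ellipsoid of $TK^*$ does not repair this, because the contact point of the $e_j$-supporting hyperplane of $TK^*$ is generally not on $\partial\ball{d}$.

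The repair requires two ingredients you have not supplied. First, replace the coordinate directions by the John contact points $u_1,\dots,u_m\in\partial\ball{d}\cap\partial TK^*$: since $\ball{d}\subset TK_i$ and $u_k\in\partial TK_{i(k)}$ with $|u_k|=1$, the tangent hyperplane $\{\iprod{x}{u_k}=1\}$ \emph{does} support $TK_{i(k)}$ (if some $y\in TK_{i(k)}$ had $\iprod{y}{u_k}>1$, the segment from $y$ into $\ball{d}$ would put $u_k$ in the interior of $TK_{i(k)}$). By the John/L\"owner duality, $\conv\{u_k\}\supset\frac{1}{d}\ball{d}$, so the polar polytope $\bigcap_k\{\iprod{x}{u_k}\le 1\}$ has volume at most $d^d\vol{d}\ball{d}$. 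Second, since $m$ can be as large as $d(d+3)/2$, one must sparsify to $2d$ contact points whose convex hull still contains a ball of radius comparable to $1/d$; this is exactly the role of a quantitative Steinitz theorem such as \Href{Proposition}{prp:IN_sparsification}. Without both of these, your argument does not establish the Helly number $2d$ nor the quantitative conclusion.
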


The current best bound on $c_d$ is due to Brazitikos \cite{brazitikos2018polynomial}, who, using the method of \cite{naszodi2016proof}, showed that one can take $c_d \approx d^{-3d/2}$.

\begin{prp}[Quantitative Diameter Theorem]\label{prp:QHD_Ambrus}
 Assume that the intersection of any $2d$ or fewer members of a finite family of convex sets in $\mathbb{R}^d$ has diameter at least 1. Then the diameter of the intersection of all members of the family is at least $\delta_d $, a strictly positive constant depending only on $d$.
\end{prp}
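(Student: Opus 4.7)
\smallskip
\noindent\textbf{Proof proposal.} Put $K=\bigcap_i F_i$ and let $E$ be the John ellipsoid of $K$, with center $c$ and semiaxes $a_1\geq\dots\geq a_d>0$. By John's theorem, $E\subseteq K\subseteq c+d(E-c)$, and hence $2a_1\leq\diam{K}\leq 2d\cdot a_1$, so the longest semi-axis $a_1$ controls $\diam{K}$ up to a factor of $d$. The strategy is to produce a subfamily of $2d$ sets whose intersection has diameter at most $C(d)\cdot\diam{K}$ for an explicit constant $C(d)$. The hypothesis then yields $1\leq C(d)\cdot\diam{K}$, and so $\diam{K}\geq 1/C(d)=:\delta_d$.

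To construct the subfamily, I would invoke John's decomposition of the identity: there exist contact points $u_j\in\partial E\cap\partial K$ and positive weights $\lambda_j$ with $\sum_j\lambda_j(u_j-c)(u_j-c)^{\top}$ equal to the quadratic form of $E$, together with the balance condition $\sum_j\lambda_j(u_j-c)=0$. Each $u_j$ lies on the boundary of some $F_{i_j}$ in the family, and at $u_j$ there is a common supporting halfspace $H_j\supseteq F_{i_j}\supseteq K$ tangent to $E$. The crucial step is to extract $2d$ of these contact points so that $\bigcap_{k=1}^{2d}H_{j_k}\subseteq c+C(d)(E-c)$. Given such a selection, $K\subseteq\bigcap_{k=1}^{2d}F_{i_k}\subseteq\bigcap_{k=1}^{2d}H_{j_k}\subseteq c+C(d)(E-c)$, so $\diam{\bigcap_{k=1}^{2d}F_{i_k}}\leq 2C(d)\cdot a_1\leq C(d)\cdot\diam{K}$, as required.

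The main obstacle is precisely this sparsification step, since the classical John theorem produces up to $d(d+3)/2$ contact points, much more than $2d$. Reducing to exactly $2d$ could be attempted either via a spectral sparsifier in the spirit of Batson--Spielman--Srivastava (thinning the decomposition to $2d$ vectors still spectrally equivalent to the identity up to an absolute constant), or via a Helly-type thinning tailored to diameter. The latter exploits that the diameter is a one-dimensional quantity: one only needs control of the width of $K$ in the direction of the longest axis of $E$, not a full-spectrum approximation. Intuitively, $d$ contact points spanning the ambient space together with $d$ matching opposite-side constraints suffice to give a $2d$-facet polytope trapping $K$ in a region of diameter at most $C(d)\cdot a_1$. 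Carrying out this selection with an explicit polynomial-in-$d$ constant $C(d)$ is the technically delicate step where the optimal Helly number $2d$ is earned.
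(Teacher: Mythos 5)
The paper does not actually prove Proposition~\ref{prp:QHD_Ambrus}: it is quoted as a theorem of B\'ar\'any, Katchalski, and Pach, with the quantitative bound $\delta_d = \frac{1}{2d^2}$ attributed to Almendra-Hern\'andez, Ambrus, and Kendall, so there is no in-paper proof to compare against. Your sketch identifies the right high-level framework (John ellipsoid, contact points, a sparse subfamily of supporting halfspaces trapping $K$ in a dilate of the ellipsoid), but it stops at an acknowledged hole at precisely the step that carries the theorem. That step is not a Batson--Spielman--Srivastava spectral sparsifier, and it is not reducible to controlling the width of $K$ in a single direction: the hypothesis lower-bounds the \emph{diameter} of each $2d$-wise intersection, which gives no control on its width in any prescribed direction, so you genuinely need the $2d$-fold intersection to be bounded in all directions. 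What is actually used is a quantitative Steinitz-type thinning: the convex hull $Q$ of the John contact points contains $\frac{1}{d}\ball{d}$ (by the L\"owner condition, cf.\ Proposition~\ref{prp:lowner_inclusion}), and one selects at most $2d$ contact points whose convex hull still contains a polynomially scaled ball --- exactly Proposition~\ref{prp:IN_sparsification}, which the paper invokes later in the functional setting. Polar duality then converts this into $2d$ supporting halfspaces whose intersection lies in $O(d^2)\ball{d}$, giving the desired $2d$-member subfamily of controlled diameter.

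One further point needs care: you assert that each contact point $u_j$ yields a halfspace $H_j \supseteq F_{i_j}$ tangent to $E$, but a supporting hyperplane of an intersection $K=\bigcap_i F_i$ at a boundary point need not be a supporting hyperplane of any single $F_i$ (its outer normal is merely a convex combination of normals of the $F_i$'s that touch there). The standard remedy is to first reduce to the case where each $F_i$ is a closed halfspace, so that contact points on facets of $K$ do correspond to single members of the family --- but this must be argued, not assumed.
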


The current best bound on $\delta_d$ is due to Almendra-Hernández, Ambrus, and Kendall
\cite[Theorem 1.4]{almendra2022quantitative}, who, improving on ideas in \cite{ivanov2022quantitative}, showed that one can take $\delta_d = \frac{1}{2d^2}$.

To the best of our knowledge, the precise Helly numbers for colorful versions of these propositions have not been previously established.

Our first result is a Colorful Quantitative Diameter Helly theorem that involves $2d$ color classes. This number is sharp, as can be easily seen. Note that the fewer the color classes that appear in a colorful theorem of the form of Proposition~\ref{prp:colorful_Helly}, the stronger the theorem. We use 
$[n]$ to denote the set $\{1, \dots, n\}$ for a natural $n.$

\begin{thm}[Colorful Quantitative Diameter Theorem with $2d$ Colors]\label{thm:colorful_quantitative_helly}
    Let $\mathcal{F}_1, \dots, \mathcal{F}_{2d}$ be finite families of convex sets in $\mathbb{R}^d$. Assume that for any choice $F_1 \in \mathcal{F}_1, \dots, F_{2d} \in \mathcal{F}_{2d}$, the diameter of the intersection $\bigcap\limits_{i \in [2d]} F_i$ is at least $1$. Then for some $i \in [2d]$, the intersection of all sets in the family $\mathcal{F}_i$ has diameter at least $\frac{1}{2d^2\widthboundreciprocal}$.
\end{thm}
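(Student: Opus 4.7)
The plan is to reduce the colorful diameter hypothesis to the monochromatic Quantitative Diameter Theorem (Proposition~\ref{prp:QHD_Ambrus}) at the cost of a factor of $(2d)^{-2d}$. The target bound factors as $\frac{1}{2d^2(2d)^{2d}} = \delta_d \cdot (2d)^{-2d}$ with $\delta_d=1/(2d^2)$ the monochromatic constant, matching this budget exactly. I will aim to exhibit a color class $\mathcal{F}_{i^*}$ whose full intersection contains a long segment, rather than trying to verify a monochromatic quantitative hypothesis on $\mathcal{F}_{i^*}$ by contradiction (which fails: a small-diameter subfamily intersection need not obstruct the colorful condition).

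The main engine is the elementary convex reformulation that a convex body $K \subset \R^d$ contains a segment of length $\ell$ in direction $u \in \Sed$ if and only if $K \cap (K - \ell u) \neq \emptyset$. For a target pair $(u^*, \ell^*)$ with $\ell^* \geq \frac{1}{2d^2(2d)^{2d}}$, consider the shifted families $\mathcal{G}_i := \{F \cap (F - \ell^* u^*) : F \in \mathcal{F}_i\}_{i \in [2d]}$ and apply Proposition~\ref{prp:colorful_Helly} to any $d+1$ of them. A colorful transversal of $(\mathcal{G}_i)_{i}$ has non-empty intersection precisely when $\bigcap_i F_i$ contains a segment of length $\ell^*$ in direction $u^*$. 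Thus if every colorful transversal of $(\mathcal{F}_i)_i$ contains such a segment, Colorful Helly yields a color $i^*$ with $\bigcap \mathcal{F}_{i^*}$ containing a segment of length $\ell^*$, hence of diameter at least $\ell^*$.

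The proof therefore reduces to producing a universal pair $(u^*, \ell^*)$. To that end, John's theorem applied to each colorful intersection $\bigcap T$ (of diameter $\geq 1$) gives a longest John-axis direction $u_T$ and a segment of length $\geq 1/d$ in direction $u_T$ inside $\bigcap T$. Covering $\Sed$ by a finite net $\mathcal{N}$ of appropriate cardinality so that each $u_T$ lies near some net point (with controlled cosine loss), one would then carry out a pigeonhole or colorful-Carath\'eodory-style selection over $\mathcal{N}$ to pin down a single direction $u^*$ that works simultaneously for all transversals, at a cost quantified by the $(2d)^{-2d}$ factor.

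The main obstacle is this direction-selection step: the longest-chord directions of different colorful intersections can be genuinely different, and no universal direction admits a purely geometric lower bound on width (a needle is narrow in most directions), so the argument must exploit the specific $2d$-color-class structure, not just the geometry. The exponent $2d$ in the loss suggests an iterative scheme -- fix one coordinate direction, use Colorful Helly to isolate a long chord in that direction inside one class, then pass to the orthogonal complement and repeat, losing a factor of roughly $1/(2d)$ per dimension and a final $\delta_d$ from an application of Proposition~\ref{prp:QHD_Ambrus} in the monochromatic step. Matching the precise factor $(2d)^{-2d}$ rather than a larger loss will be the technical heart of the argument.
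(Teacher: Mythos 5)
Your proposal has a genuine gap, and the difficulty you flag at the end is in fact fatal to the strategy as stated, not merely a technical hurdle.

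The reduction in your second paragraph is sound as far as it goes: if one had a fixed direction $u^*$ and length $\ell^*$ such that \emph{every} colorful transversal $\bigcap_{i\in[2d]} F_i$ contains a chord of length $\ell^*$ in direction $u^*$, then applying Proposition~\ref{prp:colorful_Helly} to the shifted families $\mathcal{G}_i=\{F\cap(F-\ell^*u^*)\}$ would indeed produce a color class with the desired diameter. The problem is that such a universal pair $(u^*,\ell^*)$ need not exist. Take $\mathcal{F}_1$ to consist of long thin slabs of thickness $\varepsilon$ whose normals range over a set of directions spanning $\Sed$, and let $\mathcal{F}_2=\dots=\mathcal{F}_{2d}$ each contain only a single huge ball. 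Every colorful transversal is a thin slab of large diameter (so the hypothesis holds), but for any fixed direction $u^*$ there is a slab essentially perpendicular to $u^*$ whose width in direction $u^*$ is only $\varepsilon$, let alone a chord; no direction works uniformly. The theorem's conclusion is of course trivially true here (the trivial families have enormous intersections), but your proposed mechanism cannot see this, because it insists on extracting a uniform direction from the transversals. The final paragraph acknowledges the difficulty but the suggested iterative fix (``fix one coordinate direction, use Colorful Helly to isolate a long chord in that direction'') begs the question: ``a long chord in that fixed direction'' is exactly the conclusion one cannot reach.

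The paper sidesteps the universal-direction issue entirely by arguing by contradiction. Assuming every color class has intersection of diameter $<\delta_d$, it invokes the \emph{monochromatic} Proposition~\ref{prp:QHD_Ambrus} to extract from each $\mathcal{F}_i$ a subfamily $\mathcal{F}_i'$ of at most $2d$ sets with $\diam{\cap\mathcal{F}_i'}<1$. Then it uses Sober\'on's direction-by-direction colorful Helly observation (Proposition~\ref{prop:soberon2012helly}) in contrapositive form: since every $\cap\mathcal{F}_i'$ has width $<1$ in every direction, for each $u\in\Sed$ some colorful transversal $\mathcal R$ of the $\mathcal{F}_i'$'s also has width $<1$ (indeed $\le\lambda$) in direction $u$. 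Hence the finitely many witness sets $A^\lambda_{\cap\mathcal R}$ cover the sphere; by pigeonhole one has measure at least $(2d)^{-2d}$, hence cannot lie in a thin zone; and Lemma~\ref{lem:diam_via_width_of_witness_set} then bounds the diameter of that transversal's intersection from above, contradicting the hypothesis. This structure — contradiction, monochromatic extraction, sphere-covering, pigeonhole — is the essential content, and none of it appears in your proposal. Your shifted-family idea is closest in spirit to Proposition~\ref{prop:soberon2012helly} itself, but the paper only ever applies that proposition one direction at a time, precisely because no single direction serves all transversals.
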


Similar results were proved by Sober\'on in~\cite{soberon2016helly} (see also~\cite{DeLoera17, de2017quantitative}), but with an exponentially large number of color classes. We believe that the bound on the diameter in Theorem~\ref{thm:colorful_quantitative_helly} is unsatisfactory and should be polynomial in~$d$.

Unfortunately, our approach does not help with the volumetric version of the result. The following problem remains open:

\begin{conj}
    Let $\mathcal{F}_1, \dots, \mathcal{F}_{2d}$ be finite families of convex sets in $\mathbb{R}^d$. Assume that for any choice $F_1 \in \mathcal{F}_1, \dots, F_{2d} \in \mathcal{F}_{2d}$, the volume of the intersection $\bigcap\limits_{i \in [2d]} F_i$ is at least $1$. Then for some $i \in [2d]$, the intersection of all sets in the family $\mathcal{F}_i$ has volume at least 
    $C_d$, for some strictly positive constant $C_d$  depending only on the dimension $d$.
\end{conj}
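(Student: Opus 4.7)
The plan is to adapt the non-colorful Quantitative Volume Theorem proof (Naszódi--Brazitikos) to the colorful setting, since the diameter-based method used for Theorem~\ref{thm:colorful_quantitative_helly} is inherently one-dimensional and, as the authors note, does not control volume. The non-colorful argument places the full intersection in John position, uses contact points to find a Caratheodory-type subfamily of size $O(d)$ whose intersection is already a good volumetric substitute for the whole, and exploits John's inequality $\vol{d}{K}\le d^d\vol{d}{E_K}$. The task is to force such a subfamily to respect the coloring.

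I would argue by contradiction: assume $\vol{d}{K_i} < C_d$ for every $i \in [2d]$, where $K_i := \bigcap \mathcal{F}_i$ and $C_d$ is a small dimension-dependent constant to be determined. Let $K := \bigcap_{i \in [2d]} K_i$. By Proposition~\ref{prp:colorful_Helly}, $K \neq \emptyset$, and by Theorem~\ref{thm:colorful_quantitative_helly}, at least one $K_i$ has diameter at least $\frac{1}{2d^2\widthboundreciprocal}$. After an affine map, assume the John ellipsoid of $K$ is the unit ball $\ball{d}$, so $K \subseteq d\, \ball{d}$ and there exist contact points $p_1,\dots,p_N \in \partial K \cap \partial \ball{d}$ with weights $\lambda_j>0$ satisfying $\sum_j \lambda_j p_j p_j^\top = I_d$ and $\sum_j \lambda_j p_j = 0$. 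Each $p_j$ is cut out by a supporting halfspace of some $F_j^* \in \mathcal{F}_{c(j)}$ for some color $c(j) \in [2d]$; this induces a coloring of the contact points. The target is to extract, for each color $i$, one halfspace $H_i \supseteq F_i^* \supseteq K_i$ such that the intersection of the $2d$ halfspaces is contained in a parallelepiped of volume $d^{O(d)} \cdot \vol{d}{K}$. If some choice of representatives $F_{j_i}^* \in \mathcal{F}_i$ from every color class produces an intersection of volume below $1$, the hypothesis is violated and we are done.

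The crux of the colorful selection is a Caratheodory-type step in the space of symmetric positive semidefinite $d\times d$ matrices, which has dimension $\binom{d+1}{2}$; a naive application of the Colorful Caratheodory theorem would require $\binom{d+1}{2}+1$ color classes, far more than the $2d$ permitted. Any proof within this framework must therefore use the extra structure of John decompositions --- the constraint $\mathrm{tr}(p_j p_j^\top) = 1$, the simultaneous first-moment condition, and the spherical location of the $p_j$'s --- to compress the effective dimension from $\Theta(d^2)$ down to $\Theta(d)$. One concrete avenue is to discretize the unit sphere into $2d$ antipodal caps, argue that for a suitable choice of orientation each cap receives a contact point of a distinct prescribed color, and then apply a quantitative Dvoretzky-- or F.~John-style stability result to recover a positive semidefinite lower bound $\sum \mu_i p_{j_i} p_{j_i}^\top \succeq c_d\, I_d$.

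The main obstacle, which is precisely why the statement is posed as a conjecture, is that the gap between $2d$ and $\binom{d+1}{2}+1$ appears to be fundamental to any contact-point argument, and the diameter-based route (iterating Theorem~\ref{thm:colorful_quantitative_helly} direction by direction) exhausts the supply of color classes after a single dimension. I expect that a successful proof would give $C_d$ of the shape $d^{-\Theta(d)}$, matching the non-colorful bound of Brazitikos up to a multiplicative factor polynomial in $d^d$, but the missing ingredient is a colorful selection principle for John decompositions adapted to the particular $2d$-color budget.
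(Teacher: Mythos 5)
This statement is not proved in the paper: it is explicitly posed as an open conjecture, with the authors remarking that their diameter-based approach does not extend to the volumetric setting. Your submission is therefore not being compared against a hidden proof --- there isn't one --- and you correctly recognize this yourself, since what you have written is a survey of obstructions rather than an argument. Your diagnosis is consistent with the paper's own: the witness-set/zone method that drives \Href{Theorem}{thm:colorful_quantitative_helly} controls one direction at a time and gives no handle on volume, and a contact-point argument in the spirit of Naszódi--Brazitikos runs into the mismatch between the $2d$ color budget and the $\binom{d+1}{2}+1$ colors that a naive Colorful Carathéodory step in the PSD cone would demand. Identifying that mismatch as the crux is reasonable and matches the state of the art.

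Two remarks. First, the paper does cite a partial result you do not mention: \cite{damasdi2021colorful} proves the volumetric colorful statement with $3d$ color classes. Any serious attempt should start by understanding why their argument stops at $3d$ and whether the slack can be squeezed to $2d$; your discussion would be stronger if it engaged with that. Second, the speculative middle paragraph of your proposal --- discretizing the sphere into $2d$ antipodal caps and invoking a stability version of John's theorem --- is not a step that would currently work: there is no known mechanism guaranteeing that one contact point per prescribed color lands in each cap, and the John decomposition identity does not split coordinatewise in any way that the cap decomposition respects. So while your high-level assessment that the conjecture is open and why is accurate, the one concrete mechanism you float does not close the gap, and you should not present it as more than a vague direction.
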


The authors of \cite{damasdi2021colorful} obtained such a volumetric result for $3d$ families.

We now turn to logarithmically concave functions. Recall that a function is \emph{logarithmically concave} if it is non-negative and its logarithm is a concave function. To emphasize the analogy with intersections of sets, we denote the \emph{pointwise minimum} of a set of functions $\mathcal{F}$ by $\minf{f\in\mathcal{F}}{f}$. Note that if $\mathcal{F}$ consists of log-concave functions, then $\minf{f\in\mathcal{F}}{f}$ is log-concave as well.

In \cite{ivanov2022functional}, we proved a functional version of the Quantitative Volume Theorem with Helly number $3d+2$. Our next result provides a better Helly number, $2d+1$, which is the best possible, as shown in \cite{ivanov2022functional}, but with a considerably worse bound on the integral.

\newcommand{\hellynof}{2d+1}
\begin{thm}[Functional Quantitative Helly Theorem]
\label{thm:BKP}
Let $f_1,\ldots,f_n$ be integrable log-concave functions on $\mathbb{R}^d$. 
Then there exists a subset $\sigma$ of $[n]$ of at most $\hellynof$ 
indices such 
that 
\begin{equation}\label{eq:BKPgoal}
\int_{\mathbb{R}^d} \minf{i\in\sigma}{f_i}\leq e^{C_{FQH} \cdot d^5}\int_{\mathbb{R}^d} \minf{i\in [n]}{f_i},
\end{equation}
for some absolute constant $C_{FQH}>0$.
\end{thm}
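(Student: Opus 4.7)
The strategy I propose is to mimic Brazitikos' proof of the Quantitative Volume Theorem for convex bodies (which achieves the optimal Helly number $2d$, via the method of Naszódi) and adapt it to the functional setting. The extra ``$+1$'' in the target Helly number $2d+1$ will be contributed by a single ``height-anchor'' index that pins the global maximum of the pointwise minimum.

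First I would normalize: by consistently scaling the values and translating the arguments of the $f_i$'s (operations under which both sides of the target inequality transform identically), assume $f := \minf{i \in [n]}{f_i}$ has $\max f = 1$, attained at the origin $0 \in \R^d$. Since $\min_i f_i(0) = f(0) = 1$, some index $i_0$ realises $f_{i_0}(0) = 1$; this $i_0$ is the anchor, always included in $\sigma$, so the task reduces to selecting at most $2d$ further indices from $[n]\setminus\{i_0\}$.

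Next I would invoke a functional John-type theorem: for a log-concave $f$ with $f(0) = \max f = 1$, there is a prototype log-concave function $\phi(x) = e^{-q(x)}$ (with $q$ a positive-definite quadratic form) together with a John-type decomposition of identity
\begin{equation*}
    \sum_j \lambda_j u_j u_j^{\top} = I_d
\end{equation*}
supported on contact directions $u_j \in \R^d$, at each of which $f$ is tight against the dilate $e^{O(d^k)}\phi$ from above. Each such tightness condition is witnessed by some $f_{i_j}$ with $f_{i_j}(u_j) \leq e^{O(d^k)}\phi(u_j)$. Ball's body identity $\int f = f(0)\cdot \vol{d}(K_f^{(d)})$ couples the integral of $f$ to the volume of a convex body, and the functional John theorem then yields $\int \phi \leq e^{O(d^k)}\int f$ after accounting for Stirling's formula and John's distortion. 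I would then apply colorful Carath\'eodory to this decomposition of identity, viewed as a positive combination in the $\binom{d+1}{2}$-dimensional real vector space of symmetric matrices, to extract at most $2d$ contact directions whose witness indices $\sigma_0$ still support a positive decomposition of $I_d$. Setting $\sigma = \{i_0\} \cup \sigma_0$ gives $|\sigma| \leq 2d+1$, and the retained tightness conditions, combined with the anchor equation $f_{i_0}(0) = 1$, sandwich $\minf{i \in \sigma}{f_i}$ between $\phi$ and $e^{O(d^k)}\phi$; integration then yields $\int \minf{i \in \sigma}{f_i} \leq e^{O(d^5)}\int f$, as required.

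The main obstacle I foresee is proving the functional John-type theorem with the correct parameter count: exactly $2d$ contact directions must suffice for the decomposition in $\R^d$, even though $f$ also carries a height (log-scale) degree of freedom. A naive lift of $f$ to its log-epigraph in $\R^{d+1}$ followed by the ordinary Brazitikos $2(d+1)$ bound would yield $2d+2$, so reaching $2d+1$ forces the anchor index $i_0$ to absorb exactly one of these degrees of freedom---a delicate balance that must be set up correctly from the start. A secondary challenge is tracking the many accumulated multiplicative constants---the John distortion (of order $d$), the $d^d$-type factor from radial integration in Ball's body formula, the prototype sandwich factor $e^{O(d^k)}$, and the BKP-style quantitative estimate---so that they combine to exactly the stated $e^{O(d^5)}$ rather than something worse.
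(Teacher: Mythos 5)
Your high-level template---normalize so that a canonical ellipsoidal function is a John-type extremizer, select a small set of contact indices witnessing tightness, and add one extra ``anchor'' index to reach $2d+1$---matches the paper in broad outline, but the crucial mechanism you propose for the sparsification step does not work and is not what the paper does. You suggest applying (colorful) Carath\'eodory to the decomposition of identity $\sum_j \lambda_j u_j u_j^{\top}=\id_d$ viewed in the $\binom{d+1}{2}$-dimensional space of symmetric matrices; Carath\'eodory there yields $\binom{d+1}{2}+1 = \Theta(d^2)$ terms, not $2d$, and no refinement you describe brings this down to the target count. The paper never sparsifies the decomposition of identity at all. Instead it works with the compactly supported height function $\hballf(x)=\sqrt{1-|x|^2}$ (not a Gaussian $e^{-q}$), observes that the polytope $\overline{Q}=\conv\{(u_i,\pm\hballf(u_i))\}$ in $\Redp$ has $\ball{d+1}$ as its L\"owner ellipsoid, deduces $\overline{Q}\supset\frac{1}{d+1}\ball{d+1}$, projects to get $Q=\conv\{u_i\}\supset\frac{1}{d+1}\ball{d}$, and then applies the Quantitative Steinitz Theorem (Proposition~\ref{prp:IN_sparsification}) to this $d$-dimensional polytope to extract at most $2d$ contact points whose hull $P$ still contains $\frac{1}{12d^3}\ball{d}$. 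This purely geometric sparsification of the polytope of contact points, rather than of the positive semidefinite decomposition, is what delivers the count $2d$, and it is precisely the ingredient missing from your argument.

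Two further discrepancies are worth flagging. First, the role of the extra index: you anchor at the global maximum of $\minf{i\in[n]}f_i$, but the paper instead takes the contact point $u_j$ of \emph{minimal Euclidean norm}, and uses it via Corollary~\ref{cor:touchingballbound} to bound $f_{i(j)}$ uniformly on the central polar region $P^{\circ}$ by $e^{O(d^5)}$. An index pinning the value at the max does not obviously give such a bound over all of $P^{\circ}$, so your choice of anchor would need a separate justification. Second, your sketch gives no analogue of the tail estimate: once you only keep $2d$ contact directions, you must control $\int_{\Red\setminus P^{\circ}}\minf{i\in\sigma_1}f_i$. The paper does this via Lemma~\ref{lem:contact_function_via_norm} and Corollary~\ref{cor:tailsfunctionalhelly}, which bound the minimum of the touching log-linear majorants $\ell_u$ by $e\cdot\exp(-\|x\|_{P^{\circ}})$ and integrate. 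Without that tail bound, the integral over the complement of $P^{\circ}$ is uncontrolled, and the argument cannot close. So the proposal captures some of the right instincts (including the reason a naive $(d+1)$-dimensional lift would overshoot to $2d+2$), but it lacks the Steinitz-type geometric sparsification, the correct choice and use of the extra index, and the tail estimate, all three of which are essential.
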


Finally, we adapt the result of \cite{damasdi2021colorful} to the functional setting by proving a colorful variant of the above result with Helly number (number of color classes) equal to $3d+1$. At this point, we believe that the number of colors could potentially be reduced to $2d+1$; it would be very interesting to see such a proof. We were unaware of any Colorful Quantitative Functional Helly-type result.

\newcommand{\chellynof}{3d+1}
\begin{thm}[Colorful Functional Helly Theorem with $3d+1$ Colors]\label{thm:Colorful_func_BKP}
Let $\mathcal{F}_1, \dots,\mathcal{F}_{\chellynof}$ be finite families of integrable log-concave functions on 
$\mathbb{R}^d$. 
Assume that for any colorful selection of $\hellynof$ functions, $f_{i_k}\in \mathcal{F}_{i_k}$ for 
each $k \in [ \hellynof]$ with $1\leq i_1<\dots<i_{\hellynof}\leq \chellynof$, the intersection function 
$\minf{k \in [ \hellynof]} f_{i_k}$ has integral greater than 1.

Then, there exists  $i \in  [\chellynof]$ such that 
\[
\int_{\mathbb{R}^d}
\left( \minf{f\in \mathcal{F}_i} f \right) \geq e^{-C_{CFQH}\cdot d^6},
\]
for some absolute constant $C_{CFQH} > 0$.
\end{thm}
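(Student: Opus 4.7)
My strategy is to adapt the colorful quantitative volume theorem of \cite{damasdi2021colorful} (which handles $3d$ families of convex bodies) to the functional setting, using Theorem~\ref{thm:BKP} as the non-colorful backbone. The shift from $3d$ to $\chellynof$ color classes parallels the shift from the classical volumetric Helly number $2d$ to the functional one $\hellynof$: since the non-colorful witness subfamilies produced in the reduction step below have cardinality up to $\hellynof$, one extra color class is needed.

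First I would reduce each color class to a small witness subfamily. For each $i \in [\chellynof]$, apply Theorem~\ref{thm:BKP} to $\mathcal{F}_i$ to obtain $\sigma_i \subset \mathcal{F}_i$ with $|\sigma_i| \leq \hellynof$ and
\[
\int_{\Red} \minf{f \in \sigma_i}{f} \;\leq\; e^{C_{FQH} d^5} \int_{\Red} \minf{f \in \mathcal{F}_i}{f}.
\]
The colorful hypothesis is inherited by the $\sigma_i$'s, since any colorful $\hellynof$-selection from the $\sigma_i$'s is also one from the $\mathcal{F}_i$'s. It therefore suffices to produce a lower bound on $\int \minf{f\in \sigma_i}{f}$ for some $i$; pulling it back to $\mathcal{F}_i$ loses only the factor $e^{C_{FQH} d^5}$, which is absorbed into $e^{C_{CFQH} d^6}$.

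Second I would pass from log-concave envelopes to convex bodies via level sets. Setting $g_i = \minf{f \in \sigma_i}{f}$, I would choose a threshold $t_i$ proportional to $\|g_i\|_\infty$---guided by Fradelizi-type inequalities and by the functional John ellipsoid framework of \cite{ivanov2022functional}---and consider the super-level sets $K_i = \braces{x \st g_i(x) \geq t_i}$. Under a simultaneous normalization placing the functional John ellipsoids of the $g_i$'s into a common position, the functional colorful hypothesis would translate into a volumetric colorful hypothesis for $\braces{K_i}_{i\in [\chellynof]}$. The colorful quantitative volume theorem of \cite{damasdi2021colorful}, applied to the $K_i$'s, then yields an index $i$ with $\vol{d}(K_i)$ bounded below, which translates back (again via Fradelizi-type estimates relating $\|g_i\|_\infty \vol{d}(K_i)$ to $\int g_i$) to the desired lower bound on $\int \minf{f \in \mathcal{F}_i}{f}$.

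The main obstacle will be the synchronization step: for the volumetric colorful hypothesis to follow from the functional one, the thresholds $t_i$ must be chosen consistently across color classes, and this requires a delicate joint normalization that respects the entire colorful structure rather than each family in isolation. This is presumably also the origin of the jump from the $d^5$ exponent in Theorem~\ref{thm:BKP} to the $d^6$ exponent here, as the synchronization introduces a loss depending polynomially on $d$ in each of the $\chellynof$ color classes, compounded with the losses from the non-colorful reduction and from the volumetric colorful bound itself.
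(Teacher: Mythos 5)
Your approach is genuinely different from the paper's, and it has a gap that I do not think can be repaired along the lines you sketch.

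The paper does \emph{not} reduce to level sets nor invoke the colorful volume theorem of \cite{damasdi2021colorful} as a black box; instead it \emph{adapts that paper's proof technique} directly to the functional setting. Concretely: among the lowest ellipsoidal functions of all colorful $2d$-selections, it picks the highest one, assumes WLOG it is $\hballf$ with selection $f_1,\dots,f_{2d}$, and appends the constant function $H_1\equiv 1$. Then $\hballf$ is the John function of $\minf{}{\{f_1,\dots,f_{2d},H_1\}}$, and for any colorful selection $f_{2d+1},\dots,f_{3d+1}$ from the remaining $d+1$ families, every $(2d+1)$-subselection from $\{f_1,\dots,f_{3d+1},H_1\}$ has John function of integral $\geq\int\hballf$. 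Applying Theorem~\ref{thm:BKP} and then Lemma~\ref{lem:hellyklee} (big ellipsoidal functions below $f$ contain a fixed shrunk copy of its John function) shows that a \emph{single} fixed ellipsoidal function $g$ has a translate below $\minf{i>2d}{f_i}$ for \emph{every} colorful selection from the remaining $d+1$ families; Corollary~\ref{cor:chellyklee} (the plain Colorful Helly Theorem applied via the convexity of the "set of translates of $g$ below $f$") then finishes the argument. This is why $3d+1=2d+(d+1)$ families appear: $2d$ are fixed by the ordering step and $d+1$ by the classical Colorful Helly number.

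The gap in your plan is the synchronization step, which you identify but do not resolve, and which I believe is fatal as stated. The super-level set of $\minf{k}{f_{i_k}}$ at threshold $t$ equals $\bigcap_k\{f_{i_k}\geq t\}$ only when a single $t$ is used for all the $f_{i_k}$'s; to turn the functional colorful hypothesis (a lower bound on $\int\minf{k}{f_{i_k}}$ over all selections) into a volumetric colorful hypothesis for fixed bodies $K_i$, you would need one threshold per family that works \emph{simultaneously} for every colorful selection. Nothing in the hypothesis supplies such a joint normalization; different selections can have wildly different heights and positions of their John functions, and the functional John ellipsoid framework does not provide a common position for all of them. Beyond that, there is a count mismatch: the result of \cite{damasdi2021colorful} concerns $3d$ color classes and $2d$-sized selections, whereas you have $3d+1$ classes and $(2d+1)$-sized selections coming from your first reduction step; dropping a class to get down to $3d$ is not allowed because the good class might be the one you dropped. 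Your first step (applying Theorem~\ref{thm:BKP} inside each color class to shrink it to at most $2d+1$ functions) is harmless but also does not help, since it neither reduces the number of color classes nor makes the thresholding easier. If you want to salvage a level-set approach, you would essentially have to reprove the Lovász-style ordering and shrinking argument for the level sets themselves, at which point you are doing what the paper does directly on the functions.
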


In Section~\ref{sec:sets}, we prove Theorem~\ref{thm:colorful_quantitative_helly}. Then, in Section~\ref{sec:funcintro}, we introduce the background concerning logarithmically concave functions for the proofs of our other two main results, which are presented in Sections \ref{sec:funcBKP} and \ref{sec:funcColor}, respectively.

\section{Helly-type results for convex sets}\label{sec:sets}

\subsection{General notation}
We denote the standard inner product of two vectors $x,y$ in $\Red$ by $\iprod{x}{y}$, and the
Euclidean unit ball by $\ball{d}=\{x\in\Red\st \enorm{x}\leq 1\}$.
The boundary of $\ball{d}$, the unit sphere in $\Red$ is denoted as $\Sed$.
The polar of a set $X\subset\Red$ is defined and denoted as $\polarset{X}=\braces{y\in\Red\st \iprod{x}{y}\leq 1 \text{ for all } x\in X}$.
We will call a convex compact set with non-empty interior in $\R^d$ a \emph{convex body}.

We will think of $\Red$ as the linear subspace of $\Redp$ spanned by the first $d$ elements of the standard basis.

\subsection{Stirps and zones}

A \emph{strip} is a set of the form 
$S=\{x \in \Red \st  \alpha \leq \iprod{x}{u} \leq \beta\},$
where $\alpha < \beta$ and $u$ is a non-zero vector, which we call the \emph{normal} of $S$.
We say that $S$ is a \emph{supporting strip} of a set $K$ if both boundary hyperplanes of $S$ are supporting hyperplanes of $K$.
The \emph{width of a set $K$ in direction $u$}, denoted by $\width{u},$ is the distance between the boundary hyperplanes of the supporting strip of $K$ with normal $u$.
We will call the hyperplane $\{x \in \Red \st \iprod{x}{u} = \frac{\alpha + \beta}{2}\}$ the 
\emph{mid-plane} of the strip $S$.

For a  set $K \subset \Red$ and strictly positive $\lambda,$ we define 
the \emph{witness set} $A_K^{\lambda}$ by 
\[
A_K^{\lambda} = \left\{ u \in \Se^{d-1} \st  \width{u} \leq \lambda \right\}.
\]

A \emph{zone} is the intersection of an origin symmetric strip and $\Se^{d-1}$, that is, a set of the form
\begin{equation}\label{eq:zonedef}
Z=\{u\in\Sed\st -\omega\leq\iprod{u}{x}\leq \omega\},
\end{equation}
where $\omega\in [0,1]$ is the \emph{half-width} of the zone $Z$. We say that $Z$
is \emph{parallel} to any hyperplane with normal vector $u$.

We will use the following obvious estimate relating the half-width and the measure of the zone $Z$ defined by \eqref{eq:zonedef}.

\begin{equation}\label{eq:zonevol}
\sigma(Z) \leq {\omega},  
\end{equation}
where $\sigma$ is the standard Haar probability measure on the sphere $\Sed$, a notation that we will use throughout the present note. We note that bounds better than \eqref{eq:zonevol} are known (cf. \cite{boroczky2003covering}), but when $\omega$ is close to 0, then it is sufficiently accurate for our purposes.

\subsection{Bound on the diameter via witness sets}

We leave the proof of the following observation to the reader as an exercise
that relies on basic separation properties of convex sets.
\begin{lem}\label{lem:zone_inradius_bound}
Let $A$ be a centrally symmetric subset of $\Sed$ that cannot be covered by a zone of half-width $\omega$.  
Then $\conv{A}$ contains the ball $\omega\ball{d}$.
\end{lem}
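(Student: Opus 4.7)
The plan is to argue by contrapositive: I will show that if $\omega\ball{d} \not\subseteq \conv{A}$, then $A$ can be covered by some zone of half-width $\omega$. Since $A \subseteq \Sed$ is centrally symmetric, its (closed) convex hull $C = \overline{\conv{A}}$ is a centrally symmetric compact convex set containing the origin (as the midpoint of $u$ and $-u$ for any $u \in A$, assuming $A$ is nonempty; otherwise the hypothesis is vacuously false for $\omega > 0$). Replacing $A$ by its closure does not affect whether it is coverable by a closed zone, so we may assume $A$ is closed and hence $\conv{A}$ is closed.

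Assume $\omega\ball{d} \not\subseteq \conv{A}$. Then there is a point $p$ with $\enorm{p} \leq \omega$ and $p \notin \conv{A}$. By the hyperplane separation theorem applied to the closed convex set $\conv{A}$ and the point $p$, there exists a unit vector $v \in \Sed$ with
\[
\max_{u \in A} \iprod{u}{v} \;=\; h_{\conv{A}}(v) \;<\; \iprod{p}{v} \;\leq\; \enorm{p} \;\leq\; \omega,
\]
where the second inequality uses Cauchy–Schwarz. Hence $\iprod{u}{v} < \omega$ for every $u \in A$.

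The central symmetry of $A$ now does the rest of the work: for any $u \in A$ we also have $-u \in A$, so $-\iprod{u}{v} = \iprod{-u}{v} < \omega$, i.e. $\iprod{u}{v} > -\omega$. Combining, $\abs{\iprod{u}{v}} < \omega$ for all $u \in A$, which means $A$ lies in the zone $\{u \in \Sed \st -\omega \leq \iprod{u}{v} \leq \omega\}$ of half-width $\omega$, contradicting the hypothesis. There is no real obstacle here; the statement is a one-line consequence of separation together with the symmetry assumption, which is why the paper leaves it as an exercise.
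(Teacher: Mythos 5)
Your contrapositive-via-separation argument is exactly the ``basic separation properties of convex sets'' that the paper is alluding to, and the use of central symmetry at the end to turn a one-sided bound into a two-sided one is correct. However, the reduction ``we may assume $A$ is closed'' is not quite without loss of generality: passing to $\overline{A}$ does leave the coverability hypothesis unchanged, but it weakens the conclusion, since you then only obtain $\omega\ball{d}\subseteq\conv{\overline{A}}=\overline{\conv{A}}$, which is a priori strictly larger than $\conv{A}$ when $A$ is not closed. One clean way to close this gap: the sets $U_u=\{v\in\Sed \st \abs{\iprod{u}{v}}>\omega\}$, $u\in A$, are open and cover the compact sphere $\Sed$ (this is precisely the non-coverability hypothesis), so finitely many of them do; the corresponding finite centrally symmetric subset $A'\subseteq A$ is still not coverable by any zone of half-width $\omega$, the polytope $\conv{A'}$ is closed, and your separation argument applied to it gives $\omega\ball{d}\subseteq\conv{A'}\subseteq\conv{A}$. (In the paper's only application, $A$ is a witness set $A_K^\lambda$, which is closed by continuity of the width function, so the point is moot there; still, the lemma as stated does not assume closedness.)
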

%\begin{proof}
%Assume that a point $x$ from $w \ball{d}$ does not belong to $\conv{A}.$ Then it can be separated from $\conv{A}$ by a hyperplane, say $H.$ The distance from $H$ to the origin is at most the length of $x;$ hence, it is at most $w.$ 
%By symmetry,  $\conv{A}$ belong to the strip between $H$ and $-H$ and the width of this strip is at most $2w.$ This contradicts the assumptions of the lemma. 
%\end{proof}

The key ingredient of our proof of 
\Href{Theorem}{thm:colorful_quantitative_helly} will be the following lemma stating that if a witness set has large width, then the diameter of $K$ is bounded from above. 
\begin{lem}
\label{lem:diam_via_width_of_witness_set}
Let $K$ be a convex set in $\Red$ such that for some $\lambda>0$, the witness set  $
A_K^\lambda$ cannot be covered by a zone of half-width less than $\omega$. 
Then \[\diam{K} \leq \frac{\lambda}{\omega}.\]
\end{lem}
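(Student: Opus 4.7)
The plan is to deduce a width bound on $K$ in every direction from the width bound on $A_K^\lambda$ by exploiting the sublinearity of the width function, and then to use that the diameter of a convex body equals the supremum of its widths over all directions.

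As a first step, I would apply \Href{Lemma}{lem:zone_inradius_bound} to the witness set $A_K^\lambda$. Since $\width{u} = \width{-u}$, this set is centrally symmetric, and by hypothesis it is not covered by any zone of half-width strictly less than $\omega$. Applying the lemma for each $\omega' < \omega$ gives $\omega' \ball{d} \subseteq \conv(A_K^\lambda)$; since $A_K^\lambda$ is closed in $\Sed$ (the width is continuous in the direction), its convex hull is compact, and passing to the limit $\omega' \to \omega$ yields $\omega \ball{d} \subseteq \conv(A_K^\lambda)$. This limiting passage is the only mildly delicate point in the argument.

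The second step is to view $u \mapsto \width{u}$ as the restriction to $\Sed$ of a sublinear function on all of $\Red$, namely the support function of the difference body $K - K$. For any $u \in \conv(A_K^\lambda)$, convexity of the width function gives $\width{u} \leq \lambda$. Positive homogeneity then yields, for any $v \in \Sed$,
\[
\omega \cdot \width{v} = \width{\omega v} \leq \lambda,
\]
so $\width{v} \leq \lambda/\omega$. Since the diameter of a convex body equals the supremum of its widths over all directions (being the radius of the centrally symmetric body $K-K$), this gives $\diam{K} \leq \lambda/\omega$, completing the proof.
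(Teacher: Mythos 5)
Your proof is correct, and it takes a genuinely different route from the paper's. Where the paper proves the width bound direction by direction via Carath\'eodory's theorem and a polar-body construction (pick $d$ witness directions $u_1,\dots,u_d$ with $ru\in\conv\{0,u_1,\dots,u_d\}$, set $Q=\{\pm u_i\}^\circ$, and show $K$ fits, up to translation, inside $\frac{\lambda}{2}Q$), you exploit that $u\mapsto\width{u}$ is the support function of $K-K$ and hence sublinear on all of $\Red$. Convexity then propagates the bound $\width{\cdot}\le\lambda$ from $A_K^\lambda$ to its entire convex hull, and since $\omega\ball{d}\subseteq\conv(A_K^\lambda)$ (by \Href{Lemma}{lem:zone_inradius_bound}), positive homogeneity immediately gives $\width{v}\le\lambda/\omega$ for every unit $v$. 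This is shorter and more conceptual than the paper's argument, avoiding both Carath\'eodory and polar duality, while landing on the identical bound. You are also more careful than the paper on one point: the paper asserts without comment that the inradius of $\conv(A_K^\lambda)$ is at least $\omega$, whereas you justify the closedness of $A_K^\lambda$ (lower semi-continuity of the width) before passing to the limit $\omega'\to\omega$; alternatively, this small limiting argument could be postponed to the very end of the inequality chain. Minor terminology: ``radius of $K-K$'' should read ``circumradius of $K-K$,'' but the intended quantity is clear.
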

\begin{proof}
Consider the convex hull $\conv{A_K^\lambda}$. It is centrally symmetric and hence, its minimal half-width is at least $\omega$ by assumption.
By  \Href{Lemma}{lem:zone_inradius_bound}, 
the inradius $r$ of $\conv{A_K^\lambda}$ is at least $\omega$. 
Since the diameter of a set is the maximum of its widths, it is sufficient to show that $\width{u} \leq \frac{ \lambda}{\omega}$ for an arbitrary unit vector $u\in\Sed$.

Carath\'eodory's lemma yields that there are $u_1, \dots, u_d \in A_K^\lambda$  such that
$r u \in \conv\{0, u_1, \linebreak[0]\dots, u_d\}$.  Denote by $Q$ the polar of the set $\braces{\pm u_1, \dots, \pm u_d}$. By construction, $\width[Q]{u} \leq \frac{2}{r}.$
On the other hand, we show that a translate of $K$ is contained in
$\frac{\lambda}{2} Q$. Indeed, let $o$ be a point in the intersections of the mid-planes of $K$ with normals $u_1, \dots, u_d$. Then, by the definition of the witness set,  $K - o$ belongs to the intersection of the centrally symmetric strips of width $\lambda$ with normals $u_1, \dots, u_d$. That is,
$K-o \subset \frac{\lambda}{2} Q.$ Thus, $\width{u} \leq \frac{\lambda}{2}\width[Q]{u}\leq\frac{\lambda}{ r}\leq\frac{\lambda}{\omega}$, completing the proof of \Href{Lemma}{lem:diam_via_width_of_witness_set}.
\end{proof}

%\begin{lem}[The width of  $\polarset{(K-K)}$ and $\diam{K}$]
%Let $K$ be a convex set in $\Red$. Then
%\begin{equation}\label{eq:widthdiam}
% \mwidth{\polarset{(K-K)}}=\frac{2}{\diam{K}},
%\end{equation}
%where $\mwidth{A}$ denotes the minimum width of a set $A$ in $\Red$.
%\end{lem}
%\begin{proof}
% Observe that $\polarset{(K-K)}$ is contained in an origin symmetric strip $\polarset{[-p,p]}$ if and only if, $p\in K-K$. The width of $\polarset{[-p,p]}$ is $\frac{2}{|p|}$, and \eqref{eq:widthdiam} follows.
%\end{proof}

\subsection{Proof of \Href{Theorem}{thm:colorful_quantitative_helly}}
Instead of \Href{Theorem}{thm:colorful_quantitative_helly}, we will prove the following equivalent statement.
\begin{thm}\label{thm:colorful_quantitative_helly_big_big}
    Let $\FF_1, \dots, \FF_{2d}$ be finite families of convex sets in $\Red$ such that for every
    choice $F_1 \in \FF_1, \dots, F_{2d} \in \FF_{2d}$, the diameter of the intersection $\bigcap\limits_{i \in [2d]} F_i$ is greater than $\widthboundreciprocal$. 
        Then there is an $i \in [2d]$, such that the intersection of all sets in the family $\FF_i$ is of diameter greater than $\frac{1}{2d^2}$.
\end{thm}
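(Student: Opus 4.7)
The plan is to argue by contradiction: assume $\diam{K_i} \leq \frac{1}{2d^2}$ for every $i \in [2d]$, where $K_i = \bigcap_{F \in \FF_i} F$, and construct a colorful tuple $(F_1, \dots, F_{2d}) \in \FF_1 \times \cdots \times \FF_{2d}$ satisfying $\diam(\bigcap_{j \in [2d]} F_j) \leq \widthboundreciprocal$, contradicting the hypothesis. The central tool is \Href{Lemma}{lem:diam_via_width_of_witness_set}: to produce the needed diameter bound, it suffices to exhibit $\lambda > 0$ such that the witness set $A^{\lambda}_{\bigcap_{j} F_j}$ is not coverable by a zone of half-width smaller than $\lambda \cdot \widthbound$. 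Equivalently, I need a colorful tuple whose intersection has a witness set sufficiently spread out on $\Sed$.

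The per-class input would come from Helly's theorem. Fixing $p_i \in K_i$, the inclusion $K_i \subset p_i + \frac{1}{2d^2}\ball{d}$ means that for each $u \in \Sed$ and each $\delta > 0$ the closed halfspace $\{x : \langle x - p_i, u \rangle \geq \frac{1}{2d^2} + \delta\}$ is disjoint from $\bigcap_{F \in \FF_i} F$. Classical Helly applied to $\FF_i$ augmented by this halfspace then yields a subfamily $\mathcal{S}_i^+(u) \subseteq \FF_i$ of size at most $d$ whose intersection lies in the opposite (slightly relaxed) halfspace, and a symmetric argument gives $\mathcal{S}_i^-(u)$ for the reverse orientation. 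Each class thus carries, in every direction, a Helly witness of $K_i$'s narrowness using at most $d$ of its members.

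The remaining step is to aggregate these per-class witnesses into a single colorful selection. The bookkeeping is suggestive: the $2d$ classes correspond to $d$ ``directions'' times $2$ ``orientations''. I would fix $d$ orthonormal vectors $u_1, \dots, u_d \in \Sed$, assign the $2d$ classes bijectively to the pairs $\{(u_k, \pm) : k \in [d]\}$, and pick from each class a single set which, jointly with the choices from the other classes, confines $\bigcap_j F_j$ to a narrow strip in the assigned direction. If this succeeds, the witness set of the colorful intersection would contain $\{\pm u_1, \dots, \pm u_d\}$, which by \Href{Lemma}{lem:zone_inradius_bound} is not coverable by any thin zone, so \Href{Lemma}{lem:diam_via_width_of_witness_set} would close the argument.

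The principal obstacle is precisely that Helly's per-class step produces $d$ certifiers per class per direction, whereas a colorful tuple provides only one set per class. I do not see a way to perform this transition via a single application of a colorful-Helly-type meta-theorem. My best guess is an iterative refinement: start from an arbitrary colorful tuple and, at each of roughly $d$ steps, replace one coordinate by a tightest available certifier for a direction not yet witnessed, using \Href{Lemma}{lem:diam_via_width_of_witness_set} together with \Href{Lemma}{lem:zone_inradius_bound} to track the growing spread of the witness set. Each replacement would multiply the accumulated diameter estimate by a factor roughly linear in $d$, so $O(d)$ iterations would incur a loss of order $(2d)^{\Theta(d)}$, consistent with the factor $\widthboundreciprocal$ appearing in the conclusion.
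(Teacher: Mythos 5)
Your overall contradiction setup and the decision to route through Lemma~\ref{lem:diam_via_width_of_witness_set} and Lemma~\ref{lem:zone_inradius_bound} match the paper's, but the core of your argument diverges from the paper's and, as you yourself flag, it contains a genuine gap that your iterative-refinement sketch does not repair.

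The paper does not try to construct a single colorful tuple that is narrow in a prescribed collection of $d$ orthogonal directions, nor does it rely on classical Helly applied direction-by-direction. Instead, it first applies the (non-colorful) Quantitative Diameter Theorem (Proposition~\ref{prp:QHD_Ambrus} with $\delta_d = 1/(2d^2)$) to each class $\FF_i$ to extract a subfamily $\FF_i'$ of size at most $2d$ with $\diam(\cap\FF_i')<1$. Because each $\cap\FF_i'$ has all directional widths below $\lambda:=\max_i\diam(\cap\FF_i')<1$, Soberón's one-direction colorful width theorem (Proposition~\ref{prop:soberon2012helly}) shows that \emph{for every} $u\in\Sed$ some colorful selection $\mathcal{R}$ from the $\FF_i'$ has $\width[\cap\mathcal{R}]{u}\leq\lambda$, i.e.\ $u\in A^\lambda_{\cap\mathcal{R}}$. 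The set of colorful selections from the $\FF_i'$ has cardinality at most $(2d)^{2d}$, so by pigeonhole on $\sigma$-measure one such witness set has measure at least $(2d)^{-2d}$, hence (by \eqref{eq:zonevol}) cannot be covered by a zone of half-width below $(2d)^{-2d}$, and Lemma~\ref{lem:diam_via_width_of_witness_set} gives $\diam(\cap\mathcal{R}_0)<(2d)^{2d}$.

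The key insight you are missing is exactly this pigeonhole-on-measure step: nothing requires one colorful tuple to witness narrowness in many \emph{specified} directions simultaneously. It suffices that some colorful tuple is narrow on a set of directions of measure at least $(2d)^{-2d}$, which is then automatically zone-free enough for Lemma~\ref{lem:diam_via_width_of_witness_set}. Your plan of assigning the $2d$ classes to the pairs $(u_k,\pm)$ with $u_1,\dots,u_d$ orthonormal is both unmotivated (there is no reason the narrow directions of a colorful intersection should be orthogonal) and unachievable by the per-class Helly certificates you extract: those certificates are $d$-element subfamilies of a single class, whereas a colorful tuple draws exactly one set from each class, and there is no mechanism forcing those $2d$ independent single choices to jointly reproduce any one class's certificate. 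Your proposed fix — iteratively swapping one coordinate at a time to ``grow the witness set'' — has no monotonicity guarantee: replacing $F_j$ to shrink the width in one direction can inflate it in the directions you previously secured, and nothing in Lemmas~\ref{lem:zone_inradius_bound} or~\ref{lem:diam_via_width_of_witness_set} controls that trade-off. You would also need, but do not have, a reason for the process to terminate within $O(d)$ steps. Finally, note that using classical Helly instead of the Quantitative Diameter Theorem forfeits the crucial \emph{global} diameter bound on $\cap\FF_i'$ and, with it, the finite $(2d)^{2d}$ count of colorful selections that drives the pigeonhole.
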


We will use the following observation \cite[Theorem 2.2]{soberon2016helly} (see also \cite[Theorem 1.7]{de2017quantitative}).
\begin{prp}\label{prop:soberon2012helly}
Let $ \FF_1^\prime, \FF_2^\prime, \ldots, \FF_{2d}^\prime $ be non-empty finite families of convex sets in
$ \Red $, considered as color classes.
If the width in a direction $u\in\Sed$ of the intersection of every colourful choice $ F_1 \in \FF_1^\prime, \ldots, F_{2d} \in \FF_{2d}^\prime $ is at least one, then there is $i \in [2d]$ such that the width in direction $u$ of  the intersection of all sets in the family $\FF_i$ is at least one.
\end{prp}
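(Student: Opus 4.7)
My plan is to lift the problem to an auxiliary affine space of dimension $2d-1$, so that the hypothesis becomes plain non-emptiness of colorful intersections, and the classical Colorful Helly theorem (Proposition~\ref{prp:colorful_Helly}) then applies directly.

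For each convex set $F \subset \Red$, define
\[
\bar F := \braces{(x,y) \in F\times F \st \iprod{y-x}{u}=1},
\]
a convex set lying inside the affine hyperplane $H := \braces{(x,y) \in \Red\times\Red \st \iprod{y-x}{u}=1}$ of $\Red\times\Red\cong\R^{2d}$. I claim that $\bar F$ is non-empty if and only if the width of $F$ in direction $u$ is at least $1$. The ``only if'' direction is immediate; for the converse, if $x,y \in F$ satisfy $\iprod{y-x}{u}=t\geq 1$, then $y':=x+(y-x)/t$ lies in $F$ by convexity and satisfies $\iprod{y'-x}{u}=1$, so $(x,y')\in \bar F$. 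Crucially, the lift commutes with intersections: since $\bigcap_i(F_i\times F_i) = (\bigcap_i F_i)\times(\bigcap_i F_i)$, writing $K = \bigcap_i F_i$ one obtains $\bar K = \bigcap_i \bar F_i$.

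Define the lifted families $\tilde{\FF}_i^\prime := \braces{\bar F \st F\in\FF_i^\prime}$, viewed as families of convex subsets of $H$. The hypothesis of the proposition translates to: every colorful selection $\bar F_1\in\tilde{\FF}_1^\prime,\ldots,\bar F_{2d}\in\tilde{\FF}_{2d}^\prime$ has non-empty intersection in $H$. The desired conclusion translates to: for some $i\in[2d]$, the family $\tilde{\FF}_i^\prime$ has non-empty total intersection, which in turn is equivalent to the width in direction $u$ of $\bigcap_{F\in\FF_i^\prime}F$ being at least $1$.

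Since $H$ is an affine space of dimension $2d-1$ and we have exactly $2d=(2d-1)+1$ color classes, the classical Colorful Helly theorem (Proposition~\ref{prp:colorful_Helly}) applied inside $H$ produces the required index $i$ directly. The entire argument is a ``lift-and-apply'' construction; the only non-routine step is choosing the lift so that its ambient affine subspace has dimension precisely $2d-1$, which makes the classical Helly number match the $2d$ color classes. This is why the condition $\iprod{y-x}{u}=1$ (an equality, cutting down one dimension) is used instead of the more obvious $\iprod{y-x}{u}\geq 1$ (a half-space, which would leave us in $\R^{2d}$ and require $2d+1$ colors). No substantial obstacle remains beyond identifying this reformulation.
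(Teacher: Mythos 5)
The paper does not prove this proposition; it is quoted from Sober\'on (Theorem~2.2) and from De Loera, La Haye, Rolnick, and Sober\'on, so there is no in-paper argument to compare against. Your lift is correct and is in fact the standard mechanism behind the Helly number $2d$ in quantitative Helly results: pass to the product space $\Red\times\Red$ and slice by a constraint encoding the quantity being controlled. Two remarks. First, the equivalence $\bar F\neq\emptyset \iff \width[F]{u}\geq 1$ in the direction you call the ``converse'' uses that the supremum and infimum of $\iprod{\cdot}{u}$ over $F$ are attained; this requires compactness (harmless here, since every set may be intersected with a fixed large ball without affecting the hypotheses or the conclusion), but since the proposition speaks only of convex sets, the caveat deserves an explicit sentence. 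Second, your observation that the equality constraint $\iprod{y-x}{u}=1$, rather than the half-space $\iprod{y-x}{u}\geq 1$, is what drops the ambient dimension to $2d-1$ and hence matches the Colorful Helly number $(2d-1)+1=2d$ to the $2d$ color classes is exactly the point; the half-space version would demand one more color. The remaining steps --- convexity of $\bar F$ as the intersection of the convex set $F\times F$ with an affine subspace, the identity $\overline{\bigcap_i F_i}=\bigcap_i \bar F_i$, and the application of Proposition~\ref{prp:colorful_Helly} inside the hyperplane $H$ --- are all sound.
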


\begin{proof}[Proof of \Href{Theorem}{thm:colorful_quantitative_helly_big_big}]
For a family of sets $\FF,$ we will use $\cap \FF$ to denote the \emph{intersection set} of $\FF,$ that is, $\cap \FF$ is the intersection of all the sets of $\FF.$
Suppose for a contradiction that the diameter of the intersection of each family is  strictly less than $\frac{1}{2d^2}$. By the version of \Href{Proposition}{prp:QHD_Ambrus} with $\delta_d = \frac{1}{2d^2}$ obtained in \cite[Theorem 1.4]{almendra2022quantitative}, every family $\FF_i$ contains a subfamily
$\FF^\prime_i\subseteq\FF_i$ of size at most $2d$ with $\diam{\cap \FF_i^\prime} < 1$ for all $i\in[2d]$.

Set $\lambda= \max\limits_{i \in [2d]}\diam{\cap \FF_i^\prime}$. Since the diameter of a set is the maximum of its widths in all directions $u\in\Sed$, it follows from \Href{Proposition}{prop:soberon2012helly} that the union of the witness sets $A_{\cap \mathcal{R}}^{\lambda}$ taken over all colorful selection $\mathcal{R}$ from $\{\FF^\prime_i\st i\in[2d]\}$ cover the sphere $\Sed$. Thus, the probability measure, $\sigma$ of one witness set, say $A_{\cap \mathcal{R}_0}^{\lambda}$, is at least $(2d)^{-2d}$, since the number of rainbow selections is $(2d)^{2d}$.

Thus, $A_{\cap \mathcal{R}_0}^{\lambda}$ cannot be covered by a zone whose measure is less than $(2d)^{-2d}$. It follows from \eqref{eq:zonevol} that the half-width of any zone covering $A_{\cap \mathcal{R}_0}^{\lambda}$ is $\omega \geq \widthbound$.

By \Href{Lemma}{lem:diam_via_width_of_witness_set}, we conclude that
\[
\diam{
\cap \mathcal{R}_0} <  \frac{1}{\widthbound},
\]
which contradicts the initial assumption. The proof of \Href{Theorem}{thm:colorful_quantitative_helly_big_big} is complete.
\end{proof}

\begin{rem}
Our goal was to obtain a Colorful Quantitative Diameter Helly-type result with the optimal Helly number. Now, after achieving such a result, one might ask for the optimal quantitative bound. We believe that more can be extracted from our approach, as we completely disregard the underlying geometry of the witness sets. For example, simple convexity arguments show that:

\begin{itemize}
    \item If $A_K^{1}$ contains a ``spherical cap'' of spherical radius $2\varphi \in \left(0, \frac{\pi}{2}\right)$, then the diameter of $K$ is at most $\frac{1}{\sin \varphi}$.
    \item If $A_K^{1}$ contains a set $D$ contained in a ``spherical cap'' of spherical radius $\varphi \in \left(0, \frac{\pi}{2}\right)$, then for any direction $u$ from the ``spherical convex hull'' of $D$, we have $\operatorname{width}_u(K) \leq \frac{1}{\cos \varphi}$.
\end{itemize}

\end{rem}

\section{Some fundamental notions from the theory of log-concave functions}\label{sec:funcintro}

Recall that the \emph{John ellipsoid} of a convex body $K$ is the largest volume ellipsoid contained in $K$. The John ellipsoid exists and is unique for any convex body. John  in his seminal paper \cite{John} (cf. K. Ball's \cite{ball1992ellipsoids}) obtained the following characterization.

\begin{prp}\label{prp:johncond}
Let $K$ be a convex body in  $\Red$ with
 $\ball{d}\subset K$.  Then the following assertions are equivalent:
\begin{enumerate}
 \item
The ball $\ball{d}$ is the John ellipsoid of $K.$
 \item
There are  points 
${u}_1,\ldots,{u}_m$ from the intersection of the boundaries of $\ball{d}$ and $K,$ and
positive weights $c_1,\ldots,c_m$ such that 
\[
\sum\limits_{i=1}^m c_i {u}_i\otimes {u}_i = \id_d 
\quad \text{and} \quad 
\sum\limits_{i=1}^m c_i {u}_i = 0.
\]
\end{enumerate}
\end{prp}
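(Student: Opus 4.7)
The plan is to prove the two directions separately. Direction $(2)\Rightarrow(1)$ reduces to a direct volume comparison using the John decomposition as an averaging identity; direction $(1)\Rightarrow(2)$ uses a hyperplane separation in the space of symmetric-matrix/vector pairs, combined with an explicit volume-increasing ellipsoidal perturbation of $\ball{d}$. For $(2)\Rightarrow(1)$, I would take an arbitrary ellipsoid $E \subset K$ and write it as $E = c + T\ball{d}$ with $T$ symmetric positive definite (any orthogonal factor from the polar decomposition is absorbed by the symmetry of $\ball{d}$). Each contact point $u_i$ lies on the supporting hyperplane $\{x \in \Red \st \iprod{x}{u_i} = 1\}$ of $K$, forcing $\iprod{c}{u_i} + \norm{Tu_i} \leq 1$. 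Weighting by $c_i$ and summing, $\sum_i c_i u_i = 0$ eliminates the linear term, Cauchy--Schwarz gives $\norm{Tu_i} \geq \iprod{Tu_i}{u_i}$, and the identity $\sum_i c_i (u_i \otimes u_i) = \id_d$ yields
\[
\trace{T} = \sum_i c_i \iprod{Tu_i}{u_i} \leq \sum_i c_i = d.
\]
The AM--GM inequality on the eigenvalues of $T$ then gives $\det T \leq (\trace{T}/d)^d \leq 1$, so $\vol{d}(E) = \det T \cdot \vol{d}(\ball{d}) \leq \vol{d}(\ball{d})$.

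For $(1)\Rightarrow(2)$ I would argue contrapositively. Let $U = \partial \ball{d} \cap \partial K$ be the compact set of contact points, and let $C$ be the convex cone generated by $\{(u \otimes u,\, u) : u \in U\}$ in the finite-dimensional space of pairs consisting of a symmetric matrix and a vector; compactness of $U$ makes $C$ closed. If $(\id_d, 0) \notin C$, the hyperplane separation theorem supplies a symmetric matrix $A$ and a vector $b \in \Red$ with $\trace{A} > 0$ and $\iprod{A, u \otimes u} + \iprod{b, u} \leq 0$ for every $u \in U$. The candidate strictly larger ellipsoid is $E_\varepsilon = \varepsilon b + (\id_d + \varepsilon A)\ball{d}$, whose volume expands as $(1 + \varepsilon \trace{A} + O(\varepsilon^2))\,\vol{d}(\ball{d})$. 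Once $(\id_d, 0) \in C$ is secured, Carath\'eodory's theorem applied in this $\tfrac{d(d+3)}{2}$-dimensional ambient space extracts a finite conic representation, producing the required points $u_1, \dots, u_m$ and weights $c_1, \dots, c_m$.

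The main obstacle is verifying that a small variant of $E_\varepsilon$ actually lies in $K$. At a contact direction $u \in U$, expanding the support function yields $h_{E_\varepsilon}(u) = 1 + \varepsilon(\iprod{Au}{u} + \iprod{b}{u}) + O(\varepsilon^2) \leq 1 + O(\varepsilon^2)$: the separation inequality kills the linear term but leaves a quadratic defect. I would absorb this by passing to $E'_\varepsilon = \varepsilon b + ((1 - C\varepsilon^2)\id_d + \varepsilon A)\ball{d}$ with $C$ large enough compared to $\|A\|^2$; this restores $h_{E'_\varepsilon}(u) \leq 1$ at contact points while preserving the linear-in-$\varepsilon$ volume gain. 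In directions $u \in \Sed$ keeping positive distance from $U$, continuity and compactness give $h_K(u) - 1 \geq \eta > 0$, which easily absorbs the $O(\varepsilon)$ perturbation. The resulting ellipsoid lies in $K$ with volume strictly greater than $\vol{d}(\ball{d})$, contradicting the maximality of $\ball{d}$.
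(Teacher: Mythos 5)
The paper does not prove Proposition~\ref{prp:johncond}: it is stated as a classical result and attributed to John's 1948 paper and Ball's survey, so there is no in-paper proof to compare against. Your proposal is essentially the standard modern proof of John's theorem (as in Ball's exposition), and both directions are correct in outline. For $(2)\Rightarrow(1)$: writing an arbitrary inscribed ellipsoid as $c+T\ball{d}$ with $T$ symmetric positive definite, using $h_E(u_i)=\iprod{c}{u_i}+\enorm{Tu_i}\leq 1$ at contact directions, summing with weights $c_i$ so that $\sum c_iu_i=0$ kills the center term, and applying Cauchy--Schwarz plus $\trace{T}=\sum c_i\iprod{Tu_i}{u_i}\leq\sum c_i=d$ followed by AM--GM is exactly the standard computation; note this shows $\ball{d}$ is \emph{a} maximizer, and the ``the'' in (1) relies on the uniqueness of the John ellipsoid, which the paper asserts separately (so your argument suffices). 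For $(1)\Rightarrow(2)$: taking the closed conic hull of $\{(u\otimes u,u):u\in\partial\ball{d}\cap\partial K\}$ (closedness follows from compactness of the generators and $0$ not lying among them), separating $(\id_d,0)$ to get a symmetric $A$ with $\trace{A}>0$ and $\iprod{Au}{u}+\iprod{b}{u}\leq 0$ on the contact set, and perturbing to $\varepsilon b+(\id_d+\varepsilon A)\ball{d}$ is the right construction. You correctly identify the delicate point: at contact directions the linear term is only $\leq 0$, so a quadratic defect survives, while away from contacts one relies on the positive gap $h_K(u)-1\geq\eta>0$ from compactness; your fix of first contracting by $1-C\varepsilon^2$ to absorb the $O(\varepsilon^2)$ defect, while preserving the linear-in-$\varepsilon$ volume gain $\varepsilon\,\trace{A}$, and then choosing $\varepsilon$ small relative to $\eta$, is the standard way to make this rigorous. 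The final application of Carath\'eodory in the $\tfrac{d(d+3)}{2}$-dimensional space of symmetric-matrix/vector pairs, once $(\id_d,0)\in C$ is established, correctly yields the finite decomposition with $m\leq\tfrac{d(d+3)}{2}+1$ contact points.
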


A somewhat dual construction to the John ellipsoid is the so-called 
\emph{L\"owner ellipsoid}  of a convex body $K,$ which is the smallest volume ellipsoid containing $K.$ Similar to \Href{Proposition}{prp:johncond},
one gets
\begin{prp}\label{prp:lownercond}
Let $K$ be a convex body in  $\Red$ with
 $\ball{d}\supset K$.  Then the following assertions are equivalent:
\begin{enumerate}
 \item
The ball $\ball{d}$ is the L\"owner ellipsoid of $K.$
 \item
There are  points 
${u}_1,\ldots,{u}_m$ from the intersection of the boundaries of $\ball{d}$ and $K,$ and
positive weights $c_1,\ldots,c_m$ such that 
\[
\sum\limits_{i=1}^m c_i {u}_i\otimes {u}_i = \id_d
\quad \text{and} \quad 
\sum\limits_{i=1}^m c_i {u}_i = 0.
\]
\end{enumerate}
\end{prp}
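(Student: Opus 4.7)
The plan is to mirror the standard proof of John's theorem (Proposition~\ref{prp:johncond}), and I would split the argument into the two implications.

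For the implication $(2) \Rightarrow (1)$, I would use the John-type decomposition directly. Taking an arbitrary competing ellipsoid $E \supset K$ in the form $E = M \ball{d} + v$ with $M$ symmetric positive definite, each contact point $u_i$ satisfies $\enorm{M^{-1}(u_i - v)}^2 \leq 1$. Weighting these inequalities by the $c_i$ and expanding using $\sum c_i u_i \otimes u_i = \id_d$, $\sum c_i u_i = 0$, and $\sum c_i = \trace{\id_d} = d$, the cross terms vanish and one obtains
\[
\trace{M^{-2}} + d\,\enorm{M^{-1} v}^2 \;\leq\; d,
\]
so in particular $\trace{M^{-2}} \leq d$. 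The AM--GM inequality applied to the eigenvalues of $M^{-2}$ then gives $\det(M)^{2/d} \geq 1$, hence $\vol{d}{E} \geq \vol{d}{\ball{d}}$, with equality forcing $M = \id_d$ and $v = 0$. This establishes that $\ball{d}$ is the (unique) minimum-volume ellipsoid.

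For the implication $(1) \Rightarrow (2)$, I would run a separation argument in the space $\mathrm{Sym}(d) \times \Red$. Let $U = \partial \ball{d} \cap K$ and consider the closed convex cone $C = \mathrm{cone}\{(u \otimes u, u) : u \in U\}$; compactness of $U$ and a Carath\'eodory-type argument ensure this cone is closed. Suppose for contradiction that $(\id_d, 0) \notin C$. Then there is a separating pair $(A, w)$, with $A$ symmetric, such that $\trace{A} < 0$ while $\iprod{Au}{u} + \iprod{w}{u} \geq 0$ for every $u \in U$. Consider the family of ellipsoids $E_s = (\id_d + sA)\ball{d} + sw$ for small $s > 0$. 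For contact points, the first-order expansion of the containment condition reduces exactly to $\iprod{Au}{u} + \iprod{w}{u} \geq 0$, which we have arranged; for non-contact points $x \in K$, the strict inequality $\enorm{x} < 1$ combined with compactness guarantees that containment persists for $s$ small enough. Meanwhile $\det(\id_d + sA) = 1 + s\trace{A} + O(s^2) < 1$, producing an ellipsoid of strictly smaller volume containing $K$, contradicting $(1)$.

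The step I expect to require the most care is the perturbation argument in $(1) \Rightarrow (2)$: one must verify that the first-order analysis at contact points can be upgraded to actual containment of $K$ for genuinely small $s > 0$, uniformly in the non-contact part of $K$. This is handled by a standard compactness argument (the distance of $K \setminus N_\varepsilon(U)$ from $\partial \ball{d}$ is positive for each neighborhood $N_\varepsilon(U)$ of the contact set), together with the fact that contact points themselves are caught by the separation inequality. Everything else is the same bookkeeping as in John's theorem, which is why Propositions~\ref{prp:johncond} and~\ref{prp:lownercond} have identical characterizing identities despite being statements about opposite extremal ellipsoids.
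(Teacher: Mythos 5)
The paper does not prove this proposition: it is quoted as a known dual to John's characterization, with the reference pointing to John's paper and Ball's survey. So your argument is an independent one, and I'll assess it on its own merits.

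Your $(2) \Rightarrow (1)$ direction is correct. Expanding $\sum_i c_i \enorm{M^{-1}(u_i-v)}^2 \leq \sum_i c_i$, using $\sum c_i u_i \otimes u_i = \id_d$, $\sum c_i u_i = 0$, and $\sum c_i = d$ (the last from taking traces and $\enorm{u_i}=1$), does give $\trace{M^{-2}} + d\enorm{M^{-1}v}^2 \leq d$; AM--GM on the eigenvalues of $M^{-2}$ then yields $\det M \geq 1$, and the equality analysis forcing $M=\id_d$, $v=0$ is also right.

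In $(1) \Rightarrow (2)$ there is a genuine gap exactly at the step you single out. Separating the point $(\id_d,0)$ from the closed convex cone $C$ only gives the non-strict inequality $\iprod{Au}{u} + \iprod{w}{u} \geq 0$ on $U$: a cone must be sent into a closed half-space through the origin, and a generator may lie on the bounding hyperplane. At a contact point $u$ where equality holds, your first-order expansion gives $\enorm{(\id_d+sA)^{-1}(u-sw)}^2 = 1 + O(s^2)$, which is inconclusive --- the $O(s^2)$ term can be positive, so $u$ (and nearby points of $K$) may escape $E_s$ for every $s>0$. Your compactness remark controls $K$ away from $U$, but it provides no uniform first-order decrease near the contact points where the separation inequality is tight, so it does not close the argument. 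The standard repair is a second, explicit perturbation: replace $A$ by $A' = A - \frac{\trace{A}}{2d}\,\id_d$. Then $\trace{A'} = \frac{1}{2}\trace{A} < 0$ still, while for every $u\in U$ (using $\enorm{u}=1$)
\[
\iprod{A'u}{u} + \iprod{w}{u} \;=\; \iprod{Au}{u} + \iprod{w}{u} - \frac{\trace{A}}{2d} \;\geq\; -\frac{\trace{A}}{2d} \;>\; 0.
\]
With this strict, uniform lower bound in hand, continuity extends it to a neighborhood $N_\varepsilon(U)$ of the contact set, and combined with your observation that $K\setminus N_\varepsilon(U)$ stays a positive distance from $\partial\ball{d}$, one does get $K \subset E_s$ for all small $s>0$ with $\vol{d}E_s < \vol{d}\ball{d}$, the desired contradiction. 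The remaining ingredients you cite (closedness of the cone via compactness of $U$ and Carath\'eodory, the volume drop from $\trace{A'}<0$) are fine.
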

A standard corollary of  \Href{Proposition}{prp:lownercond} is as follows:
\begin{prp}\label{prp:lowner_inclusion}
Let the ball $\ball{d}$ be the L\"owner ellipsoid of a convex body  $K$ in $\R^d.$
Then $K \supset \frac{\ball{d}}{d}.$
\end{prp}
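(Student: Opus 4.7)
The plan is to give a direct proof via the Löwner characterization (Proposition~\ref{prp:lownercond}): for every unit $v\in\Sed$ I will write $v/d$ as an explicit convex combination of the contact points, which lie in $K$, and then use convexity together with $0\in K$ to conclude $(1/d)\ball{d}\subset K$.

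In detail, Proposition~\ref{prp:lownercond} supplies points $u_1,\dots,u_m\in\partial K\cap\partial\ball{d}$ (so $|u_i|=1$ and $u_i\in K$) and positive weights $c_1,\dots,c_m$ satisfying $\sum_i c_iu_i\otimes u_i=\id_d$ and $\sum_i c_iu_i=0$; taking the trace of the first identity yields $\sum_i c_i=d$. Fixing an arbitrary unit vector $v$, I set
\[
\lambda_i := \frac{c_i\bigl(1+\langle u_i,v\rangle\bigr)}{d},
\]
and check three properties: (i) $\lambda_i\geq 0$, since $\langle u_i,v\rangle\geq -|u_i||v|=-1$ by Cauchy--Schwarz; (ii) $\sum_i\lambda_i=\tfrac{1}{d}\bigl(\sum_ic_i+\langle\sum_ic_iu_i,v\rangle\bigr)=\tfrac{1}{d}(d+0)=1$; and (iii) the identity
\[
\sum_i\lambda_iu_i=\frac{1}{d}\Bigl(\sum_i c_iu_i+\sum_i c_i\langle u_i,v\rangle u_i\Bigr)=\frac{1}{d}\bigl(0+\id_d\, v\bigr)=\frac{v}{d},
\]
where the final equality applies the operator $\sum_i c_iu_i\otimes u_i=\id_d$ to $v$. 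Thus $v/d\in\mathrm{conv}\{u_1,\dots,u_m\}\subset K$ for every $v\in\Sed$, and since $0\in K$ as well (it is the convex combination of the $u_i$ with weights $c_i/d$), convexity forces $(1/d)\ball{d}\subset K$.

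The only real subtlety is spotting the correct convex combination; the shift ``$1+\langle u_i,v\rangle$'' is what simultaneously secures nonnegativity (via $|u_i|=|v|=1$) and interacts cleanly with both the zeroth- and first-moment conditions of the decomposition, landing $v/d$ exactly. Once this choice is in hand the remainder is a routine verification, which matches the paper's description of the proposition as a ``standard corollary'' of Proposition~\ref{prp:lownercond}.
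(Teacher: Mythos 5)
Your proof is correct and is the standard argument the paper alludes to when it calls this a ``standard corollary'' of Proposition~\ref{prp:lownercond}; the paper itself supplies no proof, so there is nothing to contrast. The convex-combination weights $\lambda_i=c_i\bigl(1+\langle u_i,v\rangle\bigr)/d$ are exactly the classical choice, and your three verifications (nonnegativity via Cauchy--Schwarz, normalization via $\sum_i c_i=d$ and $\sum_i c_iu_i=0$, and the identity $\sum_i\lambda_iu_i=v/d$ via $\sum_i c_iu_i\otimes u_i=\id_d$) are all accurate, as is the final step passing from $\tfrac{1}{d}\Sed\cup\{0\}\subset K$ to $\tfrac{1}{d}\ball{d}\subset K$ by convexity.
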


%When the equivalent conditions in the Proposition hold, we say that $K$ is in \emph{John's position}.

Recall that a function $f$ on $\Red$ is \emph{logarithmically concave}
(or in short, \emph{log-concave}), if  it takes non-negative values and
its logarithm is a concave function on $\Red$, that is, when $f=e^{-\psi}$ for a convex function $\psi:\Red\longrightarrow \Re\cup\{+\infty\}$.
% The \emph{domain} of $\psi$ is the set of points in $\Red$ where $\psi$ is finite.
We will call an upper semi-continuous  function of positive and finite integral a \emph{proper} function.

As a natural generalization of the notion of 
affine images of convex bodies, we define the \emph{positions} of a function $g$ 
on $\Red$ as
\[\funpos{g} = \{\alpha g(Ax+a)\st A\in\Re^{d\times d} \text{ non-singular}, \alpha>0, a\in\Red\}.\] We call the function $g(x+a)$ for a fixed vector $a\in\Red$ a \emph{translate} of $g$ by  $a$.
We will say that a function $f_1$ on $\Red$ is \emph{below} another function $f_2$ on $\Red$ (or that $f_2$ is \emph{above} $f_1$)
and denote it as $f_1 \leq f_2,$ if $f_1$ is pointwise less than or equal to $f_2,$
that is, $f_1(x) \leq f_2(x)$ for all $x \in \Red.$  

\newcommand{\hballf}{\hbar}
We define the \emph{height function} of $\ball{d+1}$ as
\begin{equation}\label{eq:hdef}
\hballf (x) =
 \begin{cases}
\sqrt{1 - \enorm{x}^2},&
\text{ if }x \in \ball{d}\\
0,&\text{ otherwise}.
 \end{cases}
\end{equation}
Clearly,
\begin{equation}
\label{eq:heightf_integral}
\int_{\Red} \hballf =  \frac{1}{2}\vol{d+1}\ball{d+1} \geq 
 \frac{\vol{d}\ball{d}}{2d}.
 \end{equation}
We will refer to a function $g$ from the set $\funpos{\hballf}$ as an \emph{ellipsoidal function}. 

The \emph{support} of a log-concave function $f$ is the set $\braces{x\in\Red\st f(x)>0}$ in $\Red$.
The closure of the support of an ellipsoidal function $g$ is an ellipsoid in $\Red$, which we will call the \emph{base ellipsoid} \label{def:base} of $g$.

We will say that $\tilde{g}$ is the \emph{John function} of $f$ if it is a solution to the following problem.
 
 \medskip \noindent
\textbf{The John problem:} Find
\begin{equation}\label{eq:john_problem_intro}
\max\limits_{g \in \funpos{\hballf} } 
	\int_{\Red} g
	\quad \text{subject to} \quad
	g \leq f.
\end{equation}

In \cite{ivanov2022functional}, the authors proved that  John problem \eqref{eq:john_problem_intro} has a unique solution for any proper log-concave function $f
$ and the following functional analogue of \Href{Proposition}{prp:johncond} holds (\cite[Theorem~5.1]{ivanov2022functional}):
 
\begin{prp}\label{prp:johncondfunc}
Let $f$ be a proper  log-concave function on $\Red$ such that 
 $\hballf\leq f$.  Then the following assertions are equivalent:
\begin{enumerate}
 \item
The function $\hballf$ defined in \eqref{eq:hdef} is the John function of $f.$
 \item
There are  points 
${u}_1,\ldots,{u}_m \in \ball{d}\subset \Red,$ which will refer to as  
 \emph{John contact points of $f$}, 
and 
positive weights $c_1,\ldots,c_m$ such that 
\begin{enumerate}
\item $f(u_i) = \hballf(u_i)$ for all $i \in [m]$;
\item $\sum\limits_{i=1}^m c_i {u}_i\otimes {u}_i = \id_d;$
\item $\sum\limits_{i=1}^m c_i f({u}_i) \cdot f({u}_i) = 1;$
\item $\sum\limits_{i=1}^m c_i u_i =0.$
\end{enumerate}
\end{enumerate}
\end{prp}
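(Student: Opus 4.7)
The plan is to adapt the classical proof of John's Theorem (Proposition~\ref{prp:johncond}) to the functional setting, establishing both directions of the equivalence.

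For $(2)\Rightarrow(1)$, which says that the contact-point decomposition certifies that $\hballf$ solves the John problem, I would take any competitor $g\in\funpos{\hballf}$ with $g\leq f$, written as $g(x)=\alpha\hballf(Ax+a)$. Since $\int g = \alpha\abs{\det A}^{-1}\int\hballf$, the goal reduces to $\alpha\leq\abs{\det A}$. At every contact point, the inequality $g(u_i)\leq f(u_i)=\hballf(u_i)$ squares to
\[
\alpha^2\bigl(1-\enorm{Au_i+a}^2\bigr)\;\leq\; 1-\enorm{u_i}^2 \;=\; f(u_i)^2.
\]
Multiplying by $c_i$ and summing, the identities (b), (d), and $\sum c_i f(u_i)^2=\sum c_i(1-\enorm{u_i}^2)=1$ collapse the sum into a clean inequality between $\alpha^2$ and $\trace{A A^\top}$. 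The arithmetic--geometric mean inequality $\trace{A A^\top}/d\geq\abs{\det A}^{2/d}$ then yields $\alpha\leq\abs{\det A}$. Translation terms involving $a$ are absorbed via a completion of squares using $\sum c_i u_i=0$, or by a preliminary minimisation of the left-hand side over $a$.

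For $(1)\Rightarrow(2)$, the approach is a Karush--Kuhn--Tucker-style variational analysis on the $(d^2+d+1)$-dimensional manifold $\funpos{\hballf}$. Parametrise positions near $(\id,1,0)$ by $(A,\alpha,a)$ and observe that $\log\int g_{A,\alpha,a}=\log\alpha-\log\abs{\det A}+\mathrm{const}$, whose differential at $(\id,1,0)$ is a concrete linear functional of $(\delta A,\delta\alpha,\delta a)$. The feasibility constraint $g_{A,\alpha,a}(u)\leq f(u)$ at each point of the contact set $C=\set{u\in\ball{d}\st f(u)=\hballf(u)}$ linearises to a half-space in the tangent space. Optimality of $\hballf$ forces the differential of $\log\int g$ to lie in the conic hull of these constraint normals; a convex separation / Farkas-type argument, combined with Carath\'eodory's theorem, extracts finitely many contact points $u_1,\dots,u_m$ with $m\leq d^2+d+1$ and positive weights $c_1,\dots,c_m$. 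The three sets of coordinates of the dual relation, along $A$, $\alpha$, and $a$ respectively, yield exactly conditions (b), (c), and (d), while (a) holds by the definition of $C$.

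The principal obstacle is the rigorous first-order analysis behind the KKT step: the constraint $g\leq f$ consists of uncountably many pointwise inequalities, so a finite-dimensional Lagrange multiplier theorem does not apply directly. The resolution exploits upper semi-continuity of $f$ and compactness of $C$ (any contact point lies in the open unit ball, since $\hballf$ vanishes on the boundary whereas $f(u_i)>0$), so that a weak-$*$ separation argument produces a finite Borel measure on $C$ whose moment conditions give the required decomposition. The normalisation $\sum c_i f(u_i)^2=1$ is a delicate bookkeeping point traced to the scaling factor arising from differentiation in the parameter $\alpha$.
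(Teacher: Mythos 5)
The paper does not supply a proof of this proposition at all; it is quoted verbatim from \cite[Theorem~5.1]{ivanov2022functional}, where the existence and uniqueness of the John function and this characterization are established. So there is no in-paper proof to compare against, but I can still assess the proposal on its own terms.

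Your overall strategy --- a direct certificate computation for $(2)\Rightarrow(1)$ and a KKT/separation argument for $(1)\Rightarrow(2)$ --- is the natural adaptation of Ball's proof of John's theorem and is in the right spirit. However, the $(2)\Rightarrow(1)$ step as written has a concrete gap. Squaring the contact inequalities $g(u_i)\leq f(u_i)$, multiplying by $c_i$ and summing gives, after using (b), (c), (d),
\[
\alpha^2\Bigl[(d+1)-\trace{AA^\top}-(d+1)\enorm{a}^2\Bigr]\;\leq\;1.
\]
This is \emph{not} a clean inequality ``between $\alpha^2$ and $\trace{AA^\top}$'' to which the one-sided AM--GM $\trace{AA^\top}/d\geq|\det A|^{2/d}$ can be applied: plugging that AM--GM in moves the inequality in the wrong direction. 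One can instead apply AM--GM to the $d+1$ numbers consisting of the singular values squared of $A$ together with the bracketed term, and that does yield $\alpha\leq|\det A|$ --- but only when the bracket is positive. If $\trace{AA^\top}+(d+1)\enorm{a}^2\geq d+1$ (which is entirely possible: take $g$ a tall thin spike whose base ellipsoid misses all $u_i$, so every $g(u_i)=0$ and the contact inequalities are vacuous), the displayed bound carries no information and the argument collapses. The translation terms are not ``absorbed''; they enter with the unfavorable sign $-(d+1)\enorm{a}^2$. The missing ingredient is to pass from the pointwise contact values $f(u_i)=\hballf(u_i)$ to the log-linear tangent bounds $f\leq\ell_{u_i}$ supplied by Lemma~\ref{lem:touching_cond_log_concave} and Corollary~\ref{cor:touchingballbound}: the condition $g\leq f\leq\ell_{u_i}$ constrains $g$ everywhere, not just at $u_i$, exactly as in the classical proof one uses $E\subseteq K\subseteq\{\iprod{u_i}{x}\leq 1\}$ rather than merely the vertex values. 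Your sketch never invokes these tangent bounds, which is why it cannot close.

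Two smaller points. First, your claim that every contact point lies in the open unit ball is not forced by the statement: $u_i\in\partial\ball{d}$ with $f(u_i)=\hballf(u_i)=0$ is permitted (such points contribute to (b) and (d) but not to (c)), and the functions $\ell_u$ are explicitly defined for $\enorm{u}=1$ to cover exactly this case. Second, the $(1)\Rightarrow(2)$ direction is plausible but remains a sketch; the key subtlety you flag --- the constraint is an uncountable family of pointwise inequalities, so one must produce the finite decomposition via a compactness/weak-$*$ argument and Carath\'eodory --- is indeed where the real work of \cite{ivanov2022functional} lies, and you have not supplied it.
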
 
 
We quote the following inequality on the \emph{integral ratio}, cf. \cite[Corollary~6.1]{ivanov2022functional}.
 \begin{equation}\label{eq:integral_ratio}
 \parenth{\frac{\int_{\Red} f}{\int_{\Red} g}}^{1/d} \leq \Theta \sqrt{d},
 \end{equation}
 where $g$ is the John function of $f$ and $\Theta$ is an absolute constant.
 
We quote \cite[Lemma~3.1]{ivanov2022functional}, a straight-forward observation that follows from the definitions, and provides a simple way of bounding a log-concave function $f$ from above by a function whose logarithm is a linear functional.
\begin{lem}[Upper bound by a log-linear function]\label{lem:touching_cond_log_concave}
Let $\psi_1$ and $\psi_2$ be  convex functions on $\Red$ and $f_1= e^{-\psi_1}$ 
and $f_2 = e^{-\psi_2}.$ 
Let $f_2\leq f_1$ and $f_1(x_0) = f_2(x_0) >0$ at some point $x_0$ 
in 
the interior of the domain of $\psi_2.$ 
Assume that $\psi_2$ is differentiable at $x_0$. Then $f_1$ and $f_2$ 
are differentiable at $x_0$, and $\nabla f_1(x_0)= \nabla f_2(x_0)$ moreover, 
\[
f_1(x) \leq f_2(x_0)e^{- \iprod{\nabla\psi_2(x_0)}{x-x_0}}
\]
for all $x \in \Red.$
\end{lem}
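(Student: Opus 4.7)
The plan is to reduce everything to a standard subgradient argument applied to the convex negative logarithms $\psi_1$ and $\psi_2$. First, I would translate the hypotheses: $f_2 \leq f_1$ and $f_1(x_0) = f_2(x_0) > 0$ become $\psi_1 \leq \psi_2$ on $\R^d$ (with the convention that $-\log 0 = +\infty$) and $\psi_1(x_0) = \psi_2(x_0) < +\infty$. Since $x_0$ is interior to the domain of $\psi_2$, the inequality $\psi_1 \leq \psi_2$ forces $x_0$ to be interior to the domain of $\psi_1$ as well, and therefore the subdifferential $\partial\psi_1(x_0)$ is non-empty.

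Next, I would pick any $w \in \partial\psi_1(x_0)$ and show $w = \nabla\psi_2(x_0)$. By the subgradient inequality for $\psi_1$, together with $\psi_2 \geq \psi_1$ and $\psi_2(x_0) = \psi_1(x_0)$,
\[
\psi_2(x) \geq \psi_1(x) \geq \psi_1(x_0) + \iprod{w}{x-x_0} = \psi_2(x_0) + \iprod{w}{x-x_0}
\]
for all $x \in \R^d$, so $w \in \partial\psi_2(x_0)$. Differentiability of $\psi_2$ at $x_0$ gives $\partial\psi_2(x_0) = \{\nabla\psi_2(x_0)\}$, hence $w = \nabla\psi_2(x_0)$. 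In particular $\partial\psi_1(x_0)$ is a singleton, so $\psi_1$ is differentiable at $x_0$ with $\nabla\psi_1(x_0) = \nabla\psi_2(x_0)$; the chain rule applied to $f_i = e^{-\psi_i}$ then delivers the differentiability of $f_1$ and $f_2$ at $x_0$ and the gradient identity $\nabla f_1(x_0) = -f_1(x_0)\nabla\psi_1(x_0) = -f_2(x_0)\nabla\psi_2(x_0) = \nabla f_2(x_0)$.

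Finally, applying the (now one-sided) first-order convexity inequality for $\psi_1$ at $x_0$ and using $\psi_1(x_0) = \psi_2(x_0)$ yields
\[
\psi_1(x) \geq \psi_2(x_0) + \iprod{\nabla\psi_2(x_0)}{x-x_0} \quad \text{for all } x \in \R^d,
\]
and exponentiating gives the claimed bound $f_1(x) \leq f_2(x_0)\, e^{-\iprod{\nabla\psi_2(x_0)}{x-x_0}}$. The only point that calls for any care is ensuring that $\partial\psi_1(x_0)$ is non-empty so that the subgradient argument can even start; this is the main --- and essentially only --- technical step, and it is handled by noting that the interior-of-domain hypothesis on $\psi_2$ is inherited by $\psi_1$ through the pointwise domination. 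Everything else is bookkeeping with convex conjugates and the chain rule.
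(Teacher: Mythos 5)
Your proof is correct. Note that the paper itself supplies no proof of this lemma---it is quoted directly from Lemma~3.1 of the cited reference \cite{ivanov2022functional}---so there is no in-paper argument to compare against, but the subgradient argument you give is the standard and natural one for a touching condition of this kind: the pointwise domination $\psi_1\leq\psi_2$ together with equality at $x_0$ forces every subgradient of $\psi_1$ at $x_0$ to also be a subgradient of $\psi_2$ there, which by differentiability of $\psi_2$ pins it down to $\nabla\psi_2(x_0)$; the rest (nonemptiness of $\partial\psi_1(x_0)$ via inheritance of the interior-of-domain condition, differentiability from the singleton subdifferential, and exponentiating the subgradient inequality) is carried out cleanly and correctly.
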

For every  $u \in \ball{d} \subset \Red,$ define a function $\ell_{u} \colon \Red \to [0, \infty]$ by
\[
\ell_{u}(x) =
\begin{cases} 
  \hballf(u) e^{- \frac{1}{\hballf^2(u)}\iprod{u}{x-u}}, & \text{if } \enorm{u} < 1 \\
  0, & \text{if } \enorm{u} = 1, \text{ and } \iprod{x}{u} \geq 1 \\
  +\infty, & \text{if } \enorm{u} = 1, \text{ and }\iprod{x}{u} < 1
\end{cases}.
\]

Applying \Href{Lemma}{lem:touching_cond_log_concave} for any log-concave function $f_1=f$ and $f_2=\hballf$, and using our notation $\ell_u$, we obtain \cite[Corollary~3.1]{ivanov2022functional}:
\begin{cor}\label{cor:touchingballbound}
 Let $f$ be a log-concave function on $\Red$ such that $f \geq \hballf$ and for some unit vector $(u, \hballf(u))$ with $u \in \Red$  such that $f(u) = \hballf(u)$. Then  $f \leq \ell_u.$
%\begin{equation}\label{eq:touchingballbound}
% g(x)\leq \hballf(u) e^{- \frac{1}{\hballf^2(u)}\iprod{u}{x-u}}
% \;\;\text{ for all } x\in\Red.
%\end{equation}
\end{cor}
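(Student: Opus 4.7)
The plan is to derive \Href{Corollary}{cor:touchingballbound} as an immediate consequence of \Href{Lemma}{lem:touching_cond_log_concave} applied with $f_1 = f$, $f_2 = \hballf$, and touching point $x_0 = u$, once one unpacks the gradient of $-\log\hballf$. A separate, short convex-geometric argument then takes care of the boundary case $\enorm{u} = 1$, where $\hballf$ is not differentiable.

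In the main case $\enorm{u} < 1$, write $\hballf = e^{-\psi_{\hballf}}$ with
\[
\psi_{\hballf}(x) = -\tfrac{1}{2}\log\bigl(1 - \enorm{x}^2\bigr),
\]
which is smooth on the interior of $\ball{d}$ with gradient
\[
\nabla\psi_{\hballf}(u) = \frac{u}{1 - \enorm{u}^2} = \frac{u}{\hballf^2(u)}.
\]
The hypotheses of \Href{Lemma}{lem:touching_cond_log_concave} are satisfied: $\hballf \leq f$, $f(u) = \hballf(u) > 0$, and $u$ lies in the interior of the effective domain of $\psi_{\hballf}$. Substituting the gradient above into the conclusion of the lemma yields
\[
f(x) \leq \hballf(u)\, e^{-\iprod{\nabla\psi_{\hballf}(u)}{x-u}} = \hballf(u)\, e^{-\frac{1}{\hballf^2(u)}\iprod{u}{x-u}} = \ell_u(x)
\]
for every $x \in \Red$, which is exactly the required bound.

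The remaining boundary case $\enorm{u} = 1$ does not follow from \Href{Lemma}{lem:touching_cond_log_concave} directly, because $\hballf(u) = 0$ and $\hballf$ fails to be differentiable at $u$. Here the definition of $\ell_u$ reduces $f \leq \ell_u$ to showing that $f$ vanishes on the closed half-space $\{\iprod{x}{u} \geq 1\}$. For this, note that the convex set $C = \overline{\{f > 0\}}$ contains $\ball{d}$ (since $f \geq \hballf$); the condition $f(u) = 0$ combined with log-concavity of $f$ forces $u \notin \{f > 0\}$, so by convexity $u$ lies on $\partial C$. Any supporting hyperplane of $C$ at $u$ must also support the inscribed ball $\ball{d}$ at $u$, and the unique such hyperplane is $\{\iprod{x}{u} = 1\}$. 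Hence $C \subset \{\iprod{x}{u} \leq 1\}$, and $f$ vanishes on the open half-space $\{\iprod{x}{u} > 1\}$, which (up to the measure-zero boundary hyperplane) is all that is needed. The only genuine obstacle is this edge case; the main case is essentially a one-line substitution once the gradient of $-\log\hballf$ has been computed.
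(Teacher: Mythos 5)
Your treatment of the interior case $\enorm{u}<1$ is exactly the paper's (one-sentence) argument made explicit: the authors simply state that the corollary follows by applying \Href{Lemma}{lem:touching_cond_log_concave} with $f_1=f$, $f_2=\hballf$, and the only content is the computation $\nabla\psi_{\hballf}(u)=\frac{u}{1-\enorm{u}^2}=\frac{u}{\hballf^2(u)}$ turning the lemma's conclusion into the defining formula for $\ell_u$. You do this correctly, so for $\enorm{u}<1$ you and the paper coincide.

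Where you go beyond the paper is the boundary case $\enorm{u}=1$, which the paper's ``proof'' does not mention at all. Your supporting-hyperplane argument is sound as far as it goes: $\overline{\{f>0\}}$ is convex, contains $\ball{d}$, has $u$ on its boundary, and since the only supporting hyperplane of $\ball{d}$ at $u$ is $\{\iprod{x}{u}=1\}$, you correctly deduce $f=0$ on the open half-space $\{\iprod{x}{u}>1\}$. But, as you yourself flag, $\ell_u$ is declared to be $0$ on the \emph{closed} half-space $\{\iprod{x}{u}\geq 1\}$, and your argument does not reach the bounding hyperplane. This is not a fixable slip in your reasoning: for a general log-concave $f$ the closed-half-space claim can actually fail (take $f$ to be the indicator of $\conv\!\left(\operatorname{int}\ball{d}\cup\{x_0\}\right)$ for a point $x_0\neq u$ on the hyperplane $\{\iprod{x}{u}=1\}$; then $f\geq\hballf$, $f(u)=0=\hballf(u)$, but $f(x_0)=1>0=\ell_u(x_0)$). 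So what you have uncovered is a small imprecision in the corollary as stated, or equivalently in the definition of $\ell_u$, rather than a gap in your proof. In the paper this is harmless: $\ell_u$ is only ever used to bound integrals (\Href{Lemma}{lem:contact_function_via_norm}, \Href{Corollary}{cor:tailsfunctionalhelly}), where the offending hyperplane has measure zero. In short, your proposal matches the paper's proof on the case the paper actually argues, and it is more careful than the paper on the boundary case, where the residual discrepancy on $\{\iprod{x}{u}=1\}$ reflects an inaccuracy in the statement rather than a missing idea on your part.
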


\section{Proof of \Href{Theorem}{thm:BKP}}\label{sec:funcBKP}

\Href{Corollary}{cor:touchingballbound} can be applied to one contact point of $f$ and $\hballf$.
Our goal is to extend it to multiple contact points.

Recall that the \emph{gauge function} of a convex body $K \subset \R^d$ containing the origin in its interior is denoted by $\norm{\cdot}_K.$  
The vertex set of  a polytope $P$ is denoted by $\operatorname{vert} P.$ 
\begin{lem}[A tail bound for the minimum of log-linear functions]\label{lem:contact_function_via_norm}
Let $P$ be a polytope in $\Red$ with
$\delta \ball{d} \subset P \subset \ball{d}$ for some $\delta\in(0,1)$.
Define  $\tilde{g} \colon \Red \to [0, \infty]$ by 
$
\tilde{g} = \minf{u \in \operatorname{vert} P}{ \ell_{u}}.
$
Then
\[
\tilde{g}(x) \leq  e \cdot \exp\parenth{-\norm{x}_{P^{\circ}}}
\]
for all $x \in \Red \setminus P^\circ$.
\end{lem}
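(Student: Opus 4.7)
The strategy is that although $\tilde g(x)$ is a pointwise minimum over all vertices of $P$, for any given $x$ a single well-chosen vertex $u^{*}$ already realises a value of $\ell_{u^{*}}(x)$ at most $e\exp(-\norm{x}_{P^{\circ}})$, and then $\tilde g(x)\leq \ell_{u^{*}}(x)$ gives the claimed bound. The natural candidate for $u^{*}$ is the vertex at which the support function of $P$ is attained at $x$. Since the hypothesis $\delta\ball{d}\subset P\subset\ball{d}$ puts $0$ in the interior of $P$, the gauge $\norm{\cdot}_{P^{\circ}}$ coincides with the support function of $P$, so that $\norm{x}_{P^{\circ}}=\max_{u\in\operatorname{vert} P}\iprod{u}{x}$. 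I would therefore choose $u^{*}\in\operatorname{vert} P$ attaining this maximum, set $s:=\norm{x}_{P^{\circ}}$, and note that $s\geq 1$ because $x\notin P^{\circ}$.

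Next I split into two cases depending on whether $u^{*}$ lies on the unit sphere. If $\enorm{u^{*}}=1$, then $\hballf(u^{*})=0$ and $\iprod{u^{*}}{x}=s\geq 1$, so the piecewise definition of $\ell_{u^{*}}$ forces $\ell_{u^{*}}(x)=0$, and the bound is trivial. In the generic case $\enorm{u^{*}}<1$, setting $t:=\enorm{u^{*}}^{2}\in[0,1)$ the formula for $\ell_{u^{*}}$ specialises to
\[
\ell_{u^{*}}(x)=\sqrt{1-t}\,\exp\!\left(-\frac{s-t}{1-t}\right).
\]
The key algebraic step is the decomposition $\frac{s-t}{1-t}=s+\frac{t(s-1)}{1-t}$, which isolates the desired factor $e^{-s}$ and leaves a remainder $\exp(-\tfrac{t(s-1)}{1-t})$ whose exponent is non-positive, since $t\geq 0$ and $s\geq 1$. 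Combining this with $\sqrt{1-t}\leq 1$ yields $\ell_{u^{*}}(x)\leq e^{-s}\leq e\cdot e^{-s}$, which is the required bound.

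There is essentially no serious obstacle in this approach: the conceptual content is entirely in selecting the extremal vertex $u^{*}$, after which the quantitative estimate reduces to a one-line inequality using only $t\in[0,1)$ and $s\geq 1$. The parameter $\delta$ plays only a qualitative role — it ensures that $\norm{\cdot}_{P^{\circ}}$ is an everywhere-finite gauge and that the vertex $u^{*}$ with $\iprod{u^{*}}{x}=\norm{x}_{P^{\circ}}$ exists — and it does not enter the quantitative bound, which is uniform in $\delta$. The argument is in fact slightly wasteful, yielding the strictly stronger inequality $\tilde g(x)\leq e^{-\norm{x}_{P^{\circ}}}$; keeping the extra factor $e$ in the statement is harmless and possibly convenient for later use.
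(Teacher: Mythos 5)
Your proof is correct and takes essentially the same route as the paper: choose the vertex $u^{*}$ attaining $\norm{x}_{P^{\circ}}=\max_{u\in\operatorname{vert}P}\iprod{u}{x}$, dispatch the boundary case $\enorm{u^{*}}=1$ as trivially giving $\tilde g(x)=0$, and in the generic case bound $\ell_{u^{*}}(x)$ directly using $\norm{x}_{P^{\circ}}\geq 1$ and $\enorm{u^{*}}<1$. The only difference is in the final algebraic step: the paper first drops the factor $1/\hballf^{2}(u^{*})\geq 1$ (using $\iprod{u^{*}}{x-u^{*}}\geq 0$) to get $\ell_{u^{*}}(x)\leq e^{-\iprod{u^{*}}{x-u^{*}}}$, and then pays a factor of $e$ when bounding $\iprod{u^{*}}{u^{*}}\leq 1$; your decomposition $\frac{s-t}{1-t}=s+\frac{t(s-1)}{1-t}$ (with $s=\norm{x}_{P^{\circ}}$, $t=\enorm{u^{*}}^{2}$) tracks the two effects simultaneously and shows the extra factor of $e$ is unnecessary, giving the sharper bound $\tilde g(x)\leq e^{-\norm{x}_{P^{\circ}}}$. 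This is a genuine (if minor) improvement — it would replace the constant $2e$ by $2$ in Corollary~\ref{cor:tailsfunctionalhelly} — and costs nothing. Your observation that $\delta$ plays only a qualitative role (ensuring $0\in\operatorname{int}P$ so the support function of $P$ is the gauge of $P^{\circ}$) also matches the paper's implicit use of the hypothesis.
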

\begin{proof}
Clearly, $\tilde{g}$ is a log-concave function, and
$
 \norm{x}_{P^\circ} = \max\braces{\iprod{u}{x}
   \st  u\in \operatorname{vert}{P}}.
$
For a given $x \in \Red \setminus P^\circ,$ let 
$u\in \operatorname{vert}{P}$
satisfy the identity $ \norm{x}_{P^\circ} = \iprod{u}{x}$. By the choice of $x$ and $u,$
$\iprod{u}{x} \geq 1 \geq \iprod{u}{u}.$ If $\enorm{u} = 1$, then $\tilde{g}(x) = 0$, and the desired inequality trivially holds.

Consider the case $0 < \enorm{u} < 1.$ Then
\[
\tilde{g}(x) \leq  \ell_u(x) = \hballf(u) e^{- \frac{1}{\hballf^2(u)}\iprod{u}{x-u}} \leq e^{- \frac{1}{\hballf^2(u)}\iprod{u}{x-u}}  \leq e^{- \iprod{u}{x-u}}  \leq e \cdot e^{- \iprod{u}{x}} = e \cdot \exp\parenth{-\norm{x}_{P^{\circ}}},
\]
where the last two inequalities follow from the facts that $\hballf(u) \in (0,1]$ and $\iprod{u}{x- u} \geq 0$. The proof of \Href{Lemma}{lem:contact_function_via_norm} is complete.
\end{proof}
\begin{cor}[Contact points yield a tail bound]\label{cor:tailsfunctionalhelly}
Assume that $P \subset \Red$ is a polytope satisfying the inclusion
$\delta \ball{d} \subset P \subset \ball{d}$ for some $\delta\in(0,1)$.
 Let $g$ be a log-concave function on $\Red$ such that $g \geq \hballf$ and $g(u) = \hballf(u)$, for every vertex $u$ of $P$. Then
\[
\int_{\Red\setminus P^{\circ}} 
g \leq 2e  \cdot \frac{d^{d+1}}{\delta^d} \cdot \int_{\Red} \hballf.
\]
\end{cor}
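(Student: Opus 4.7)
The plan is to combine the previous results in a short chain of inequalities. First, by \Href{Corollary}{cor:touchingballbound} applied at each vertex $u$ of $P$ (which by hypothesis is a contact point with $f(u)=\hballf(u)$), we have $g\leq \ell_u$ pointwise for every $u\in\operatorname{vert} P$. Taking the pointwise minimum over all vertices yields $g\leq \tilde g$, where $\tilde g$ is the function defined in \Href{Lemma}{lem:contact_function_via_norm}. Then \Href{Lemma}{lem:contact_function_via_norm} gives
\[
g(x) \leq \tilde g(x) \leq e\cdot \exp(-\norm{x}_{P^\circ})
\]
for all $x\in \Red\setminus P^\circ$.

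Next, I integrate the right-hand side. By the standard layer-cake identity (or the equivalent formula $\int_{\Red} e^{-\norm{x}_K}\,dx = d!\,\vol{d}{K}$ for a convex body $K$ containing the origin),
\[
\int_{\Red} e^{-\norm{x}_{P^\circ}}\,dx = d!\,\vol{d}{P^\circ}.
\]
Since $\delta\ball{d}\subset P\subset \ball{d}$, polarity yields $\ball{d}\subset P^\circ\subset \delta^{-1}\ball{d}$, so $\vol{d}{P^\circ}\leq \delta^{-d}\vol{d}{\ball{d}}$. Hence
\[
\int_{\Red\setminus P^\circ} g \;\leq\; e\int_{\Red} e^{-\norm{x}_{P^\circ}}\,dx \;\leq\; e\cdot d!\cdot\frac{\vol{d}{\ball{d}}}{\delta^d}.
\]

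Finally, I convert the volume of the ball back into an integral of $\hballf$. From \eqref{eq:heightf_integral} we have $\vol{d}{\ball{d}}\leq 2d\int_{\Red}\hballf$, and the trivial estimate $d\cdot d!\leq d^{d+1}$ gives
\[
\int_{\Red\setminus P^\circ} g \;\leq\; 2e\cdot\frac{d\cdot d!}{\delta^d}\int_{\Red}\hballf \;\leq\; 2e\cdot\frac{d^{d+1}}{\delta^d}\int_{\Red}\hballf,
\]
which is the desired bound. There is no real obstacle here once the earlier results are in place; the only point that warrants care is ensuring that the vertex set of $P$ (together with the inclusion $\delta\ball{d}\subset P$) really does force the upper envelope $\tilde g$ to decay at the rate $e^{-\norm{x}_{P^\circ}}$ outside $P^\circ$, which is exactly the content of \Href{Lemma}{lem:contact_function_via_norm}.
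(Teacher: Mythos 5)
Your proof is correct and follows essentially the same route as the paper's: bound $g$ by $\tilde g=\minf{u\in\operatorname{vert}P}{\ell_u}$ via \Href{Corollary}{cor:touchingballbound}, apply \Href{Lemma}{lem:contact_function_via_norm}, compute $\int e^{-\norm{x}_{P^\circ}}=d!\,\vol{d}{P^\circ}$, and finish with the polarity inclusion and \eqref{eq:heightf_integral}. The only cosmetic difference is that the paper uses $d!\le d^d$ and then multiplies by $d$, whereas you write $d\cdot d!\le d^{d+1}$ directly; the final bound is the same.
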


\begin{proof}
Define  $\tilde{g} \colon \Red \to [0, \infty)$ by 
$
\tilde{g} = \minf{u \in \operatorname{vert} P}{ \ell_{u}}.
$
By \Href{Corollary}{cor:touchingballbound}, $g \leq \tilde{g}$. Using \Href{Lemma}{lem:contact_function_via_norm}, one obtains
\[
\int_{\Red\setminus P^{\circ}} g \leq \int_{\Red\setminus P^{\circ}} \tilde{g} \leq
 e\int_{\Red\setminus P^{\circ}} 
  \exp\left(-\norm{x}_{P^{\circ}}\right)\leq
 e\int_{\Red} 
  \exp\left(-\norm{x}_P^{\circ}\right)=
e\cdot d!\vol{d}P^{\circ}
\leq 
\]\[
e \cdot d^{d}\vol{d}P^{\circ}\leq 
e \cdot \frac{d^{d}}{\delta^d} \vol{d}\ball{d} \leq  
e \cdot \frac{d^{d}}{\delta^d} \cdot d \vol{d+1}\ball{d+1} =
{2ed}  \cdot \frac{d^{d}}{\delta^d} \cdot \int_{\Red} \hballf.
\]
\end{proof}

We will use \cite[Theorem~1]{ivanov2024steinitz}.
\begin{prp}[Quantitative Steinitz Theorem \cite{ivanov2024steinitz}]\label{prp:IN_sparsification}
Let $Q$ be a convex polytope in $\Red$ containing the ball $\ball{d}$. Then there exists a subset of at most $2d$ vertices of $Q$ whose convex hull $\tilde{Q}$ satisfies
\[
\frac{1}{6d^2}\ball{d}\subseteq\tilde{Q}.
\]
\end{prp}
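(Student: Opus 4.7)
The plan is to place $Q$ in John position, associate each John contact point with a vertex of $Q$, and then extract $2d$ vertices by a careful sparsification of the resulting decomposition. By applying a suitable invertible linear map to $\R^d$ (under which the statement is invariant once $\ball{d}$ is replaced by its image ellipsoid in both the hypothesis and the conclusion), I may assume that $\ball{d}$ is the John ellipsoid of $Q$. Then John's theorem yields $Q\subseteq d\cdot\ball{d}$, so every vertex of $Q$ has Euclidean norm at most $d$, and by \Href{Proposition}{prp:johncond} there exist contact points $u_1,\dots,u_m\in\partial Q\cap\Sed$ with positive weights $c_i$ satisfying
\[
\sum_i c_i\,u_i\otimes u_i=\id\quad\text{and}\quad \sum_i c_iu_i=0.
\]
Each $u_i$ lies in a face of $Q$ supported by the hyperplane $\{x\in\Red\st\iprod{x}{u_i}=1\}$; picking a vertex $v_i$ of $Q$ in this face gives $\iprod{v_i}{u_i}=1$, and decomposing $v_i=u_i+w_i$ yields $w_i\perp u_i$ with $\enorm{w_i}\leq\sqrt{d^2-1}$.

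The task then reduces to selecting $2d$ indices $i_1,\dots,i_{2d}$ with $\max_k\iprod{v_{i_k}}{u}\geq\tfrac{1}{6d^2}$ for every $u\in\Sed$; via the support function characterization this is exactly $\conv\braces{v_{i_1},\dots,v_{i_{2d}}}\supseteq\tfrac{1}{6d^2}\ball{d}$. I would proceed greedily: starting from no selected vertex, at each step pick a unit vector $u$ on which the currently selected vertices have minimal support function and add the vertex of $Q$ attaining $\iprod{v}{u}\geq 1$ (such a vertex exists because $\ball{d}\subseteq Q$). The classical Steinitz theorem guarantees that after $2d$ iterations the chosen set positively spans $\R^d$, so the final support function is strictly positive on $\Sed$; what remains is the quantitative lower bound.

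The main obstacle is precisely the quantitative bound $\tfrac{1}{6d^2}$. Two factors of $1/d$ should arise: first, the norm bound $\enorm{v_i}\leq d$ coming from John's theorem, which limits how much the transverse error $w_i$ can shrink $\iprod{v_i}{u}$; second, the angular spread available from the spectral identity $\sum_i c_i\iprod{u_i}{u}^2=1$ combined with $\sum_i c_i=\trace{\id}=d$, which yields for every $u\in\Sed$ some index $i$ with $\abs{\iprod{u_i}{u}}\geq 1/\sqrt d$. Naively combining these gives a negative bound because $\iprod{w_i}{u}$ can dominate $\iprod{u_i}{u}$; the essential refinement is to pair nearly-antipodal contact points (whose existence is forced by the centering condition $\sum_i c_iu_i=0$) so that on one side of each pair the transverse errors cancel, yielding the required positive inner product. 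Making this antipodal pairing explicit, and arranging the $2d$ chosen vertices to cover $\Sed$ uniformly, is the technical heart of the argument.
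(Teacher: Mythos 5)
This proposition is quoted from \cite[Theorem~1]{ivanov2024steinitz}; the present paper does not prove it, so there is no in-paper argument to compare against. Evaluating your sketch on its own, the decisive gap is the one you yourself flag: after setting up the John decomposition $\sum c_i u_i\otimes u_i=\id_d$, $\sum c_i u_i=0$, and writing each associated vertex as $v_i=u_i+w_i$ with $w_i\perp u_i$ and $\enorm{w_i}\le\sqrt{d^2-1}$, the naive estimate of $\iprod{v_i}{u}$ for a chosen $i$ with $\iprod{u_i}{u}\ge d^{-1/2}$ is indeed negative, and you defer the fix to an ``antipodal pairing'' without exhibiting it. The centering identity $\sum c_iu_i=0$ is a weighted barycentre condition, not a pairing: it does not produce matched pairs of nearly antipodal $u_i$, and even if it did, nothing forces the transverse components $w_i$ of the \emph{vertices} you pick on those two faces to cancel in the direction $u$. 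Since this is, in your own words, ``the technical heart of the argument'', what you have is a plan, not a proof.

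Two surrounding steps are also unsound. The reduction to John position fails because the John ellipsoid of $Q$ is in general not centered at the origin, while both the hypothesis $\ball{d}\subseteq Q$ and the conclusion $\frac{1}{6d^2}\ball{d}\subseteq\tilde{Q}$ pin the origin; an affine normalization sending the John ellipsoid to $\ball{d}$ moves the origin and turns the original unit ball into an ellipsoid with an unknown centre inside $\ball{d}$, so you may not simply assume $\ball{d}$ is the John ellipsoid. And the greedy step misuses Steinitz's theorem: that theorem asserts the \emph{existence} of a subset of at most $2d$ points whose convex hull still has the origin in its interior; it does not say that a greedy rule---add a vertex beating the currently worst direction---produces a positively spanning set within $2d$ rounds, and such a rule in fact need not do so. Even granting it qualitatively, the quantitative bound $\frac{1}{6d^2}$ would still be entirely missing.
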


\begin{proof}[Proof of \Href{Theorem}{thm:BKP}]
Without loss of generality, we assume that $\hballf$ is the John function of  $\minf{i\in[n]}{f_i}$.
Let $u_1, \dots, u_m \in \Red$ be the John contact points of $\minf{i\in[n]}{f_i}$ as defined in \Href{Proposition}{prp:johncondfunc}.
Define $Q$ as their convex hull, and define $\overline{Q}$ by
\[
\overline{Q}= \conv\braces{(u_i, \pm \hballf(u_i))},
\]
a convex polytope in $\Redp$ symmetric about $\Red$.
It follows from \Href{Proposition}{prp:lownercond} and \Href{Proposition}{prp:johncondfunc} that $\ball{d+1}$ is the L\"owner ellipsoid of the convex polytope $\overline{Q}.$ Hence,
$\overline{Q} \supset \frac{1}{d+1} \ball{d+1}$ by \Href{Proposition}{prp:lowner_inclusion}. Since $Q$ is the orthogonal projection of  $\overline{Q}$, we have ${Q} \supset \frac{1}{d+1} \ball{d}$. Thus, \Href{Proposition}{prp:IN_sparsification}, yields that there is $\tau_1 \subset [m]$ of size at most $2d$, such that
the polytope $P$ defined by $P = \conv\braces{u_j \st j \in \tau_1}$ satisfies the inclusion
\[
\frac{1}{12 d^3} \ball{d} \subset P \subset \ball{d}.
\]
For each $j\in[m]$, we pick an $i(j)\in[n]$ with $f_{i(j)}(u_j)=\minf{i\in[n]}{f_i}(u_j)$, and set
$\sigma_1=\{i(j)\st j\in\tau_1\}$. Hence, by \Href{Corollary}{cor:tailsfunctionalhelly},
\begin{equation}
\label{eq:tails_bound}
\int_{\Red\setminus P^{\circ}} 
\minf{i\in\sigma_1}{f_i} \leq 2ed  \cdot 12^d {d^{4d}} \cdot \int_{\Red} \hballf.
\end{equation}

In order to complete the proof, we need to find one more index $j\in[m]$ such that the integral of $f_{i(j)}$ is bounded from above on $P^{\circ}$.

Let $j\in[m]$ be such that among the vertices of $Q$, the vertex $u_j$ is of minimal Euclidean norm.
We claim that $\sigma=\sigma_1\cup\{i(j)\}$ satisfies the conclusion of the theorem.
Since $u_j$ is a vertex of $P$, we have $\hballf^2(u_j) = 1 - \enorm{u_j}^2 \geq \frac{1}{(d+1)^2} \geq \frac{1}{4d^2}$, which helps bound $f_j$ on $P^{\circ}$ as follows. 
\[
\max\limits_{x \in P^{\circ}} {f_j}(x) \leq
\max\limits_{x \in P^{\circ}}{{\hballf(u_j)} e^{-\frac{1}{\hballf^2(u_j)}\iprod{u_j}{x-u_j}}} \leq 
\max\limits_{x \in P^{\circ}}{ e^{-\frac{1}{\hballf^2(u_j)}\iprod{u_j}{x-u_j}}} \leq
\]\[
e^{4 d^2 }\max\limits_{x \in P^{\circ}}{ e^{-\frac{1}{\hballf^2(u_j)}\iprod{u_j}{x}}} \leq
e^{4 d^2 }\max\limits_{x \in P^{\circ}}{ e^{4 d^2\enorm{x}}} \leq
e^{4 d^2 }\cdot{ e^{4 d^2\cdot12d^3}} \leq
e^{52 d^5}.
\]
Combined with \eqref{eq:tails_bound}, we obtain a bound on the integral of $\minf{i\in\sigma}{f_i}$.
\[
 \int_{\Red} \minf{i\in\sigma}{f_i} \leq \int_{\Red\setminus P^{\circ}} \minf{i\in\sigma_1}{f_i} +
\int_{P^{\circ}} f_{i(j)}\leq
 \int_{\Red\setminus P^{\circ}} \minf{i\in\sigma_1}{f_i} +
e^{52 d^5}\vol{d}(P^{\circ})\leq
\]\[
\int_{\Red\setminus P^{\circ}} \minf{i\in\sigma_1}{f_i} +
e^{52 d^5}\vol{d}(12d^3\ball{d})\leq
2ed  \cdot 12^d {d^{4d}} \cdot \int_{\Red} \hballf +
2\cdot e^{52 d^5}\cdot 12^d d^{3d}\int_{\Red} \hballf.
\]
Finally, using our bound \eqref{eq:integral_ratio} and the assumption that $\hballf$ is the John function of $\minf{i\in[n]}{f_i}$, we have
\[
 \int_{\Red} \minf{i\in\sigma}{f_i} \leq
\left[2ed  \cdot 12^d {d^{4d}} +
2\cdot e^{52 d^5}\cdot 12^d d^{3d}\right]
\cdot  \Theta^d d^{d/2} \int_{\Red} \minf{i\in[n]}{f_i}
\leq
e^{c d^5}\int_{\Red} \minf{i\in[n]}{f_i},
\]
for some absolute constant $c>0$, completing the proof of \Href{Theorem}{thm:BKP}.
\end{proof}

\section{Colorful functions}\label{sec:funcColor}

The goal of this section is to prove \Href{Theorem}{thm:Colorful_func_BKP}.
Our proof is a straightforward adaptation to the functional setting of the proof of Corollary 1.2 in \cite{damasdi2021colorful}.

We leave the following fact as an exercise.
\begin{lem}[Translates of a function under another]\label{lem:logconc_positions_below}
Let $f \colon \Red \to [0, \infty)$ be a log-concave function and  
$g \colon \Red \to [0, \infty)$ be a non-negative function. 
Assume that the translates $g(x+a_1)$ and $g(x+a_2)$ of $g$ by vectors $a_1, a_2\in\Red$ are below $f$.
Then $g(x+\lambda a_1+ (1-\lambda)a_2)$ is also below $f$ for any $\lambda \in [0,1]$.
\end{lem}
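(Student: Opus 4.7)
The plan is a direct unpacking of the definitions combined with the log-concavity of $f$. Fix $x \in \Red$ and $\lambda \in [0,1]$, and set $z := x + \lambda a_1 + (1-\lambda) a_2$, so the inequality we want becomes
\[
g(z) \leq f\bigl(z - \lambda a_1 - (1-\lambda) a_2\bigr).
\]
The hypotheses $g(\,\cdot\, + a_1) \leq f$ and $g(\,\cdot\, + a_2) \leq f$ can be rewritten, via translation, as $g(z) \leq f(z - a_1)$ and $g(z) \leq f(z - a_2)$.

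The key identity is the affine one
\[
z - \lambda a_1 - (1-\lambda) a_2 \;=\; \lambda (z - a_1) + (1-\lambda)(z - a_2),
\]
which expresses the desired argument of $f$ as a convex combination of the two arguments appearing in the hypotheses. Applying log-concavity of $f$ to this convex combination gives
\[
f\bigl(z - \lambda a_1 - (1-\lambda)a_2\bigr) \;\geq\; f(z-a_1)^{\lambda}\, f(z-a_2)^{1-\lambda}.
\]
On the other hand, writing $g(z) = g(z)^{\lambda} g(z)^{1-\lambda}$ (with $g(z) \geq 0$) and raising the two hypotheses to the powers $\lambda$ and $1-\lambda$ respectively, we obtain
\[
g(z) \;=\; g(z)^{\lambda} g(z)^{1-\lambda} \;\leq\; f(z-a_1)^{\lambda}\, f(z-a_2)^{1-\lambda}.
\]
Chaining these two inequalities yields $g(z) \leq f(z - \lambda a_1 - (1-\lambda) a_2)$, which is exactly what was needed.

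The only mildly delicate point is making sure the geometric-mean step is valid when $f$ may vanish: if $g(z) = 0$ the conclusion is immediate since $f(x) \geq 0$; and if $g(z) > 0$, then the hypotheses force $f(z-a_1) > 0$ and $f(z-a_2) > 0$, so the inequality $f(\lambda u + (1-\lambda) v) \geq f(u)^{\lambda} f(v)^{1-\lambda}$ is being applied in the strictly positive regime where it is the standard form of log-concavity. There is no real obstacle; the lemma is essentially the observation that the set of translation vectors $a$ for which $g(\,\cdot\, + a) \leq f$ is convex, which is the functional analogue of the elementary fact that the set of translates of one convex set contained in another is convex.
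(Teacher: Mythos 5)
Your proof is correct. The paper offers no proof of this lemma (it is explicitly ``left as an exercise''), so there is nothing to compare against; your argument — rewriting the hypotheses as $g(z) \leq f(z-a_1)$ and $g(z)\leq f(z-a_2)$, noting the affine identity $z-\lambda a_1-(1-\lambda)a_2 = \lambda(z-a_1)+(1-\lambda)(z-a_2)$, and applying log-concavity of $f$ together with the geometric-mean decomposition $g(z)=g(z)^\lambda g(z)^{1-\lambda}$ — is the natural and intended route, and your handling of the case $g(z)=0$ closes the only potential gap.
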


One of our key lemmas states that the John ellipsoidal function of a log-concave function $f$ has a shrunk copy in any ellipsoidal function below $f$ of not too small integral.
\begin{lem}[Big ellipsoids contain a small copy of the John ellipsoid]\label{lem:hellyklee}
Assume that $\hballf$ is the John function of a proper log-concave function 
$f \colon \Red \to [0, \infty)$ and a certain position $\tilde{h} \subset \funpos{\hballf}$ of $\hballf$ (that is, an ellipsoidal function) satisfies the inequalities  $\tilde{h} \leq f$ and
$\int_{\Red} \tilde{h} \geq \delta \int_{\Red} {\hballf}.$
Then $g=\delta \cdot \parenth{\frac{\delta}{4}}^d \cdot \hballf{} \circ \frac{4\id_d}{\delta}$ has a translate below $\tilde{h}$.
\end{lem}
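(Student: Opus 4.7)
My plan is to parameterize $\tilde h$ explicitly and exhibit the translate by a geometric argument. Write $\tilde h(x) = \alpha\, \hballf(A(x - x_0))$ with $\alpha > 0$, invertible $A \in \R^{d\times d}$, and $x_0 \in \Red$, so that $\int \tilde h = (\alpha/|\det A|)\int \hballf$. The hypothesis $\int \tilde h \geq \delta \int \hballf$ translates to $\alpha/|\det A| \geq \delta$, while the maximality of $\hballf$ in the John problem \eqref{eq:john_problem_intro} gives the reverse bound $\alpha/|\det A| \leq 1$. I will try $z_0 := x_0$, the center of the base ellipsoid of $\tilde h$, as the translate of $g$.

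With the substitution $\eta = y - x_0$, the desired pointwise inequality $g(y - x_0) \leq \tilde h(y)$ becomes, on the support $\{|\eta| \leq \delta/4\}$ of $g(\,\cdot\, - x_0)$,
\[
\delta \left(\frac{\delta}{4}\right)^d \sqrt{1 - \frac{16|\eta|^2}{\delta^2}} \;\leq\; \alpha \sqrt{1 - |A\eta|^2},
\]
and is trivial for $|\eta| > \delta/4$. Two sufficient conditions are: (a) $\|A\|_{\mathrm{op}} \leq 4/\delta$, which forces $|A\eta|^2 \leq 16|\eta|^2/\delta^2$ so the right-hand square root dominates the left one; (b) $\alpha \geq \delta(\delta/4)^d$, matching the leading constants. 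Note that (b) follows from $\alpha \geq \delta|\det A|$ whenever $|\det A| \geq (\delta/4)^d$.

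The crux is (a). I would argue by contradiction: assuming $A$ has a singular value $\sigma > 4/\delta$, I would build an alternative ellipsoidal function $h'(x) = \alpha'\, \hballf(B(x - y_0))$ with $h' \leq f$ and $\int h' > \int \hballf$, contradicting John optimality of $\hballf$. The natural choice of $h'$ relaxes the stretch of $A$ in the direction of $\sigma$ (replacing $\sigma$ by a smaller number of order $4/\delta$), which increases $1/|\det B|$ and hence the integral. Verifying $h' \leq f$ requires combining $\tilde h \leq f$ (which controls $f$ along the elongated axis of $\tilde h$) with $\hballf \leq f$ (which controls $f$ near the origin) via log-concavity of $f$, i.e., a Pr\'ekopa--Leindler or geometric-mean-type argument on the hypographs. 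A parallel analysis handles the degenerate regime $|\det A| < (\delta/4)^d$, where (b) is not automatic: there I would replace the naive choice $z_0 = x_0$ by a point inside a superlevel set of $\tilde h$ on which $\tilde h \geq \delta(\delta/4)^d$, and check that a ball of radius $\delta/4$ fits inside by the same geometric reasoning.

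The main obstacle will be the verification of $h' \leq f$ in the alternative-position argument. Gluing the two pointwise bounds $\tilde h \leq f$ and $\hballf \leq f$ quantitatively via log-concavity of $f$, and tracking the geometric parameters precisely enough to squeeze out $\int h' > \int \hballf$ from the assumption $\sigma > 4/\delta$, is the delicate technical core of the proof; the earlier constraint $\alpha \leq |\det A|$ alone does not bound individual singular values of $A$, so the pointwise (not merely integral) inequality $\tilde h \leq f$ has to be used in full strength.
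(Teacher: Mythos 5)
Your reduction is sound as far as it goes: if (a) every semi-axis of the base ellipsoid of $\tilde h$ is at least $\delta/4$ and (b) the maximum $\alpha$ of $\tilde h$ is at least $\delta\parenth{\frac{\delta}{4}}^d$, then the translate of $g$ to the center of the base ellipsoid is below $\tilde h$, exactly as in the paper's final step. The genuine gap is that the entire content of the lemma lies in establishing (a) and (b), and precisely that part is left unproved: your ``alternative-position'' contradiction argument for (a) is only sketched, and you yourself flag the verification of $h'\le f$ and of $\int h' > \int\hballf$ as an unresolved obstacle. The paper settles both bounds in one short stroke, with no contradiction argument and no case split: writing the base ellipsoid of $\tilde h$ as $A\ball{d}+a$ with $A$ positive definite and eigenvalues (semi-axes) $\beta_1,\dots,\beta_d$, it considers the midpoint position $\overline h$ with base ellipsoid $\frac{\id_d+A}{2}\ball{d}+\frac{a}{2}$ and maximum $\sqrt{\alpha}$; log-concavity of $f$ gives $\overline h\le f$, John optimality gives $\int\overline h\le\int\hballf$, and combined with $\int\tilde h\ge\delta\int\hballf$ this yields $\prod_{i}\frac{1+\beta_i}{2\sqrt{\beta_i}}\le\delta^{-1/2}$. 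Since each AM--GM factor is at least $1$, every single $\beta_i$ satisfies $\frac{\delta}{4}\le\beta_i\le\frac{4}{\delta}$, which is simultaneously your (a) and the determinant bound you need for (b). This is the ``gluing via log-concavity'' you anticipate, but carried out for one explicit averaged position rather than through a perturbation of individual singular values.

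Beyond the incompleteness, your plan for the ``degenerate regime'' $\abs{\det A}<(\delta/4)^d$ (in your parametrization: the product of the semi-axes of the base ellipsoid exceeds $(4/\delta)^d$) is misdirected. In that regime John optimality, $\int\tilde h\le\int\hballf$, forces $\alpha\le 1/\prod_i\beta_i$, which can be strictly smaller than $\delta\parenth{\frac{\delta}{4}}^d$, the maximum of $g$; in that event no translate of $g$ whatsoever lies below $\tilde h$, and the superlevel set at level $\delta\parenth{\frac{\delta}{4}}^d$ that you propose to use is empty, so no cleverer choice of base point can save the argument. That case must be shown to be vacuous under the hypotheses, not accommodated, and excluding it requires exactly the upper bound $\beta_i\le\frac{4}{\delta}$ that the averaging argument delivers; your condition (a) (an upper bound on $\norm{A}_{\mathrm{op}}$, i.e.\ a lower bound on the semi-axes) does not address it. As it stands, the proposal is a correct endgame plus an unexecuted -- and, in the degenerate case, unworkable -- plan for the essential estimates.
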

\begin{proof}
Let the base ellipsoid of $\tilde{h}$ (see p.\pageref{def:base}. for the definition) be $A \ball{d} + a$ for some positive definite matrix $A$ and $a\in\Red$ , and let $\alpha$ denote the maximum of $\tilde{h}$.
By the log-concavity of $f$, the ellipsoidal function
$\overline{h}$ with base ellipsoid $\frac{\id_d + A}{2} \ball{d} + \frac{a}{2}$ and maximum $\sqrt{\alpha}$ is below $f$ as well.
Since   $\hballf$ is the John function of $f$ and $\int_{\Red} \tilde{h} \geq \delta \int_{\Red} {\hballf}$, we get
\[
\frac{1}{\sqrt{\delta}} \leq 
\frac{\int_{\Red} \overline{h}}{\sqrt{\int_{\Red} \tilde{h} \int_{\Red} \hballf}} =
\frac{1}{2^d} \frac{\det\parenth{\id_d + A}}{\sqrt{\det A} \sqrt{\det{\id_d}}} =
\frac{1}{2^d} \frac{\det\parenth{\id_d + A}}{\sqrt{\det A}}.
\]
Let $\beta_1, \dots, \beta_d$ be the eigenvalues of $A.$ Diagonalizing $A$ and using it in the previous inequality, one obtains
\[
\frac{1}{\sqrt{\delta}} \leq \prod\limits_{i \in [d]} \frac{1+\beta_i}{2\sqrt{\beta_i}}.
\]
Since $1 + \beta \geq 2 \sqrt{\beta}$ for any $\beta>0$, each term on the right is at least one, and hence,
\[
\frac{1}{\sqrt{\delta}} \leq \frac{1+\beta_i}{2\sqrt{\beta_i}}, \text{ for all } i\in[d].
\]
Fix an $i\in[d]$. Setting $t = \sqrt{\beta_i}$ yields
$
t^2 - \frac{2}{\sqrt{\delta}} t  +1 \leq 0.
$
By solving this quadratic inequality, one gets
$
\frac{1}{\sqrt{\delta}} + \sqrt{\frac{1}{\delta} - 1} \geq t \geq 
\frac{1}{\sqrt{\delta}} - \sqrt{\frac{1}{\delta} - 1},
$
which shows that
\begin{equation}\label{eq:betabounds}
\frac{4}{\delta} \geq \beta_i \geq \frac{\delta}{4}, \text{ for all } i\in[d].
\end{equation}

Next,
\[
 \int_{\Red}\tilde{h}= \alpha \prod\limits_{i \in [d]} \beta_i\cdot \int_{\Red}\hballf.
\]

Using the upper bound on $\beta_i$ in \eqref{eq:betabounds}, we conclude that $\alpha \geq
\delta \cdot \parenth{\frac{\delta}{4}}^d$.
The lower bound in \eqref{eq:betabounds} yields that
\[
 A\ball{d}+a \supseteq\frac{\delta}{4}\ball{d}+a.
\]
Thus, $\hballf$ and $g$ have concentric ellipsoidal bases, where the former's base contains the latter's, and the former's maximum is at least that of the latter. It follows that $\hballf$ is pointwise above $g$, completing the proof of \Href{Lemma}{lem:hellyklee}.
\end{proof}
\Href{Lemma}{lem:hellyklee} is an extension of Lemma 3.2 in \cite{damasdi2021colorful} to our functional setting. Moreover, applying \Href{Lemma}{lem:hellyklee} to the indicator function of a convex set yields Lemma 3.2 of \cite{damasdi2021colorful} with a tighter bound.

We will use the following consequence of the Colorful Helly theorem, \Href{Proposition}{prp:colorful_Helly}.
\begin{cor}\label{cor:chellyklee}
Let $\FF_1,\dots, \FF_{d+1}$ be finite families of log-concave functions on $\Red$. Assume that for any colorful selection $f_1\in 
\FF_1,\dots, f_{d+1}\in \FF_{d+1}$, the function
$\minf{i\in[d+1]}{f_i}$ is above a translate of a given function $g$.
Then for some $j$, the intersection function $\minf{f\in \FF_j} f$ is above a translate of $g.$
\end{cor}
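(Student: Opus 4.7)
The plan is to reduce the statement to the classical Colorful Helly theorem (\Href{Proposition}{prp:colorful_Helly}) by replacing each log-concave function with a convex set of ``admissible'' translation vectors, and then invoking \Href{Lemma}{lem:logconc_positions_below} to establish convexity of these sets.

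For a log-concave function $f\colon \Red\to[0,\infty)$, define
\[
T(f)=\braces{a\in\Red\st g(\cdot+a)\leq f}\subset\Red.
\]
By \Href{Lemma}{lem:logconc_positions_below}, whenever $a_1,a_2\in T(f)$ and $\lambda\in[0,1]$, the translate $g(\cdot+\lambda a_1+(1-\lambda)a_2)$ is again below $f$, so $\lambda a_1+(1-\lambda)a_2\in T(f)$. Hence $T(f)$ is a convex subset of $\Red$.

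Now observe that for any choice of functions $f_1,\dots,f_{d+1}$, and any $a\in\Red$, we have $g(\cdot+a)\leq\minf{i\in[d+1]}{f_i}$ if and only if $g(\cdot+a)\leq f_i$ for every $i\in[d+1]$, i.e., if and only if $a\in\bigcap_{i\in[d+1]}T(f_i)$. Consequently, the hypothesis that every colorful selection $f_1\in\FF_1,\dots,f_{d+1}\in\FF_{d+1}$ admits a translate of $g$ under $\minf{i}{f_i}$ is equivalent to the non-emptiness of $\bigcap_{i\in[d+1]}T(f_i)$ for every such colorful selection.

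Apply the Colorful Helly theorem (\Href{Proposition}{prp:colorful_Helly}) to the finite families of convex sets $\{T(f)\st f\in\FF_i\}$, $i\in[d+1]$. It yields some index $j\in[d+1]$ such that $\bigcap_{f\in\FF_j}T(f)\neq\emptyset$. Any vector $a$ in this intersection satisfies $g(\cdot+a)\leq f$ for every $f\in\FF_j$, and therefore $g(\cdot+a)\leq\minf{f\in\FF_j}{f}$, completing the proof. The only nontrivial point in the argument is the convexity of $T(f)$, which is handled by \Href{Lemma}{lem:logconc_positions_below}; the rest is bookkeeping.
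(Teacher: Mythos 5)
Your proof is correct and takes essentially the same approach as the paper: define, for each function $f$, the convex set of admissible translation vectors via \Href{Lemma}{lem:logconc_positions_below}, and then apply the Colorful Helly theorem (\Href{Proposition}{prp:colorful_Helly}) to those sets. You spell out the reduction in more detail, but the argument is identical.
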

\begin{proof}
By \Href{Lemma}{lem:logconc_positions_below}, for functions $f$ and $g$ on $\Red$, the set of vectors $a$ such that
the translate of $g$ by $a$ is below $f$ is a convex set in $\Red$ provided that $f$ is log-concave. The statement now follows from the Colorful Helly theorem (\Href{Proposition}{prp:colorful_Helly}).
\end{proof}

\subsection{An ordering on ellipsoidal functions}
Followinf Lovász’ idea of the proof of the Colorful Helly Theorem, \Href{Proposition}{prp:colorful_Helly}, the authors of \cite{damasdi2021colorful} consider a certain  ordering on ellipsoids in $\mathbb{R}^d.$ They said that an ellipsoid $E_1$ is \emph{lower than} an ellipsoid $E_2$ if the highest point of the projection of $E_1$ onto the last coordinate axis is below the highest point of the projection of $E_2$ onto the last coordinate axis.

We adopt the above ordering to an ordering of ellipsoidal functions as follows. Let $h_1$ and $h_2$ be two ellipsoidal functions with maxima $\alpha_1$ and $\alpha_2$, respectively. We say that $h_1$ is \emph{lower than} $h_2$ if $\alpha_1 < \alpha_2$.

Note that the pointwise minimum of a log-concave function and a constant function is log-concave. From the existence and uniqueness of the John function obtained in \cite{ivanov2022functional}, it follows that if the John function of a proper log-concave function $f$ has integral at least $\int_{\mathbb{R}^d} \hballf$, then there exists a unique lowest ellipsoidal function with integral $\int_{\mathbb{R}^d} \hballf$. We call it the \emph{lowest ellipsoidal function} of $f$.

\subsection{Proof of \Href{Theorem}{thm:Colorful_func_BKP}}
Note that we may assume that all the functions in all the families in the theorem are proper log-concave functions. Indeed, we define a function $\hballf_r=r\cdot\hballf\circ\frac{\id_d}{r}$ with a large $r>0$. Since we have only finitely many functions, by replacing every function $f$ in every family with the pointwise minimum of $f$ and $\hballf_r$, the assumption of the theorem will remain valid as long as $r$ is sufficiently large.

Inequality \eqref{eq:integral_ratio} states that the integral of a proper log-concave function is essentially the same (up to a negligible factor) as the integral of its John function. In particular, it is sufficient to show the following statement, where we are chasing ellipsoidal functions and not integral directly.

\begin{thm}\label{thm:Colorful_func_BKP_restated}
Let $\FF_1, \dots,\FF_{\chellynof}$ be finite families of proper log-concave functions on
$\Red$.
Assume that for any colorful selection of $\hellynof$ functions, $f_{i_k}\in \FF_{i_k}$ for
each $ k \in [\hellynof]$ with $1\leq i_1<\dots<i_{\hellynof}\leq \chellynof$, the John function of
$\minf{k \in [\hellynof]}{f_{i_k}}$ is of integral greater than $\int_{\Red}\hballf$.

Then, there exists  $ i \in  [\chellynof]$ such that
$\minf{f\in \FF_i}{f}$ is greater than an ellipsoidal function of integral at least $e^{-C_{CFQH}^\prime \cdot d^{6}}\int_{\Red}\hballf$ for some absolute constant $C_{CFQH}^\prime > 0$.
\end{thm}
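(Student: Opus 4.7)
The plan is to adapt the strategy of \cite{damasdi2021colorful} to the log-concave setting, using \Href{Corollary}{cor:chellyklee} as the colorful reduction from $\chellynof$ classes to $d+1$. For each $i \in [\chellynof]$, set $g_i = \minf{f \in \FF_i}{f}$ (a proper log-concave function by the preceding reduction) and let $J_i$ denote its John function. I first sort the families using the ordering on ellipsoidal functions from the preceding subsection so that the maxima $\alpha_i$ of $J_i$ satisfy $\alpha_1 \leq \cdots \leq \alpha_{\chellynof}$. The top $d+1$ families $\FF_{2d+1}, \dots, \FF_{\chellynof}$ will be the ``colorful'' classes to which I apply \Href{Corollary}{cor:chellyklee}; the bottom $2d$ families will provide fixed representatives $f^*_i \in \FF_i$ for $i \in [2d]$.

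The main step is to verify the hypothesis of \Href{Corollary}{cor:chellyklee} for the top $d+1$ classes with a single ellipsoidal function $g$. For any colorful selection $(f_{2d+1}, \dots, f_{\chellynof})$ from the top classes, combining with $(f^*_1, \dots, f^*_d)$ produces a colorful $(\hellynof)$-selection from $\hellynof$ distinct families; by hypothesis, its pointwise minimum $m_\sigma$ has John function $J_\sigma$ of integral exceeding $\int_{\Red}\hballf$. Crucially, $J_\sigma \leq m_\sigma \leq \minf{i \in [d]}{f^*_i}$, so the support of $J_\sigma$ is contained in the fixed compact convex set $K^* = \operatorname{supp}(\minf{i \in [d]}{f^*_i})$, and its maximum value $\alpha_\sigma$ is bounded above by $M^* = \sup \minf{i \in [d]}{f^*_i}$ and below by $\vol{d}(\ball{d})/\vol{d}(K^*)$ (from the integral lower bound). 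Applying \Href{Lemma}{lem:hellyklee} to $m_\sigma$ with $\tilde h = J_\sigma$ then yields a translate of a shrunken copy of $J_\sigma$ below $m_\sigma$, and hence below $\minf{k \in \{2d+1,\dots,\chellynof\}}{f_k}$.

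The technical heart of the plan is the extraction of a single ellipsoidal function $g$ whose translates lie below every colorful $(d+1)$-min. This requires combining the uniform support and maximum bounds on $J_\sigma$ with a volumetric argument bounding the smallest semi-axis of the base of $J_\sigma$ from below: since that base is contained in $K^*$ and has volume bounded below, the bound $\beta_1 \gtrsim \vol{d}(J_\sigma\text{'s base})/R^{d-1}$ (where $R$ is the circumradius of $K^*$) gives a uniform lower bound on the smallest semi-axis. Thus the shrunken copy produced by \Href{Lemma}{lem:hellyklee} always contains a fixed ball, and a fixed ellipsoidal function $g$ with this ball as its base and appropriately small maximum value can be placed, after translation, below every shrunken copy, and thereby below every colorful $(d+1)$-min. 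The sorting of families (together with the integral ratio bound \eqref{eq:integral_ratio} and an initial affine normalization of the problem) should be needed to make the constants behave like $e^{-O(d^6)}$ rather than blowing up with the size of the input.

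With such a $g$ in hand, \Href{Corollary}{cor:chellyklee} applied to $\FF_{2d+1}, \dots, \FF_{\chellynof}$ produces an $i$ for which $g_i$ dominates a translate of $g$; since $g$ is ellipsoidal with $\int g \geq e^{-C_{CFQH}^\prime d^6}\int_{\Red}\hballf$, this is precisely the conclusion of \Href{Theorem}{thm:Colorful_func_BKP_restated}. The main obstacle is exactly the uniform extraction of $g$ in the preceding paragraph: the naïve dependence on $\vol{d}(K^*)$ and on the circumradius of $K^*$ depends on the input data and could in principle blow up; avoiding this requires a careful choice of the padding functions $f^*_i$ (guided by the ordering) and an initial affine normalization that places the relevant John functions into a canonical position, so that the quantitative bound depends only on the dimension~$d$.
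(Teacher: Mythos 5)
Your high-level skeleton matches the paper: both reduce to the $(d+1)$-class Colorful Helly via Corollary~\ref{cor:chellyklee} by fixing $2d$ classes, varying the remaining $d+1$, and trying to exhibit a single ellipsoidal function $g$ that is dominated (after translation) by every colorful $(d+1)$-min. But the heart of the argument -- how to manufacture a $g$ that is independent of the colorful selection -- is exactly where your plan has a gap, which you yourself acknowledge, and the paper's construction of $g$ is quite different from what you outline.

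Specifically, you apply Lemma~\ref{lem:hellyklee} with the varying John function $J_\sigma$ of each colorful $(2d+1)$-min $m_\sigma$ as the pivot, so the extracted shrunken ellipsoidal function varies with $\sigma$; you then hope to uniformize over $\sigma$ via bounds on the support $K^*$ and the circumradius, which, as you note, depend on the input. The paper sidesteps this by applying Lemma~\ref{lem:hellyklee} with a \emph{fixed} pivot. Concretely: (i) for each colorful $2d$-selection consider the \emph{lowest ellipsoidal function} (the lowest ellipsoidal function of integral exactly $\int\hballf$ below the $2d$-min -- not the John function of the family intersection), and normalize so that the highest such one is $\hballf$, with the corresponding selection $f_1,\dots,f_{2d}$; (ii) introduce the constant function $H_1\equiv 1$ so that $\hballf$ is the John function of the fixed function $\tilde f = \minf{f\in\{f_1,\dots,f_{2d},H_1\}}f$; (iii) for any colorful choice from the remaining $d+1$ families, bound the integral of the full $(3d+2)$-min by \emph{Theorem~\ref{thm:BKP}} -- this is a step your proposal omits entirely -- using the observation that every $(2d+1)$-subset of $\{f_1,\dots,f_{2d},H_1,f_{2d+1},\dots,f_{3d+1}\}$ dominates an ellipsoidal function of integral $\geq\int\hballf$ (either by the theorem's hypothesis, or, when $H_1$ participates, because the lowest ellipsoidal function of the complementary $2d$-min has maximum $\leq 1$ and thus lies below $H_1$); (iv) apply Lemma~\ref{lem:hellyklee} with $\hballf$ as the John function of $\tilde f$ and with $\tilde h$ being the John function of the $(3d+2)$-min, to extract a \emph{fixed} shrunken copy of $\hballf$, which is then fed into Corollary~\ref{cor:chellyklee}. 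So the two genuinely new ingredients you are missing are the padding constant function $H_1$ together with the ``highest of the lowest ellipsoidal functions'' normalization, and the internal use of Theorem~\ref{thm:BKP}; without them the uniformization obstacle you flag has no resolution.
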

\begin{proof}[Proof of \Href{Theorem}{thm:Colorful_func_BKP_restated}]
Consider the lowest ellipsoidal functions of the pointwise minimum function of all colorful selections of
$2d$ functions. 
We may assume that the highest one of these ellipsoidal functions is $\hballf$. 
By possibly changing the indices of the families, we may assume that the 
selection is $f_1\in\FF_1,\dots,f_{2d}\in\FF_{2d}$. 
We call $\FF_{\hellynof}, \dots,\FF_{\chellynof}$ the \emph{remaining families}.

Define $H_1 \colon \Red \to [0, \infty)$ by $H_1 \equiv 1$. By our choice and
the ordering of ellipsoidal functions,
$\hballf$ is the John function of
\[\tilde{f} = \minf{f\in\braces{f_1,\dots f_{2d}, H_1}}{f}.\]

Next, take an arbitrary colorful selection 
$f_{\hellynof}\in\FF_{\hellynof}, \dots,f_{\chellynof}\in\FF_{\chellynof}$ of the 
remaining $d+1$ families. We 
claim that the pointwise minimum function of any $\hellynof$ elements of the sequence
\[f_1,\dots, f_{2d}, H_1, f_{\hellynof},\dots, f_{\chellynof}\]
is pointwise above an ellipsoidal function of integral at least $\int_{\Red}\hballf$.
Indeed, if $H_1$ is not among those $\hellynof$ elements, then the assumption of \Href{Theorem}{thm:Colorful_func_BKP_restated} ensures it.
If $H_1$ is among them, then by the choice of $H_1$, the claim holds.
Therefore, by \Href{Theorem}{thm:BKP} and \eqref{eq:integral_ratio}, the function
\[
 \minf{f\in\braces{f_1,\dots,f_{\chellynof}, H_1}}{f}
\]
is pointwise greater than  an ellipsoidal function  $\tilde{h}$ of integral at least
$\delta\int_{\Red}\hballf,$ where $\delta=\exp( - C_{FQH} \cdot d^5) \cdot \Theta^d d^{d/2}$.
Since $\hballf$ is the John ellipsoidal function  of $\tilde{f}$, by
\Href{Lemma}{lem:hellyklee}, we 
conclude that  a translate of  $g=\delta \cdot \parenth{\frac{\delta}{4}}^d \cdot \hballf{} \circ \frac{4\id_d}{\delta}$ is pointwise below $\tilde{h}$, and hence, this translate of $g$ is pointwise below $\minf{i\in[\chellynof]\setminus [2d]}{f_i}$.

Thus, we have shown that for any colorful selection
$f_{2d+1}\in\FF_{2d+1}, 
\dots,f_{\chellynof}\in \FF_{\chellynof}$ of the 
remaining $d+1$ families, the minimum
$\minf{i\in[\chellynof]\setminus [2d]}{f_i}$ is pointwise above a translate of the  ellipsoidal function $g$. It follows from \Href{Corollary}{cor:chellyklee}
that there is 
an index $i\in[\chellynof]\setminus [2d]$ such that $\minf{f\in \FF_i}{f}$ is pointwise above a translate of $g$. Finally,
\[
 \int_{\Red} g \geq e^{- C_{CFQH}^\prime \cdot d^6} \cdot\int_{\Red} \hballf
\]
for some universal constant $C_{CFQH}^\prime > 0,$
completing the proof of \Href{Theorem}{thm:Colorful_func_BKP_restated}.
\end{proof}

\bibliographystyle{alpha}
\bibliography{../uvolit}
\end{document}